\def\ot{\rightarrow}
\def\T{\operatorname{T}}
\let\cal\mathcal
\def\eps{{\epsilon}}
\def\cO{{\cal O}}
\newcommand{\se}[1]{\begin{equation*}\begin{split}#1\end{split}\end{equation*}}
\newcommand{\C}{\mathbb{C}}
\newcommand{\N}{\mathbb{N}}
\newcommand{\Z}{\mathbb{Z}}
\newcommand{\R}{\mathbb{R}}
\newcommand{\cD}{\mathcal{D}}
\newcommand{\cI}{\mathcal{I}}
\newcommand{\cJ}{\mathcal{J}}
\newcommand{\cC}{\mathcal{C}}
\newcommand{\naive}{\mathsf{naive}}
\newcommand{\vtx}[1]{*+[o][F-]{\scriptscriptstyle #1}}
\newcommand{\mm}{\textrm{min}}
\newcommand{\Tr}{\textrm{Tr}}
\newcommand{\cycle}{\circlearrowright}
\newcommand{\RS}{\ensuremath{\mathsf{RS}}}
\newcommand{\<}{\langle}
\renewcommand{\>}{\rangle}
\newcommand{\End}{\ensuremath{\mathsf{End}}}
\newcommand{\gr}{\ensuremath{\mathsf{gr}}}
\newcommand{\Mod}{\ensuremath{\mathsf{Mod}}}
\newcommand{\TMod}{\ensuremath{\mathsf{Nil}}}
\newcommand{\fg}{\ensuremath{\mathsf{FG}}}
\newcommand{\CS}{\ensuremath{\mathsf{CS}}}
\newcommand{\degree}{\text{deg}}
\newcommand{\Stab}{\ensuremath{\mathsf{Stab}}}
\newcommand{\Ext}{\mathsf{Ext}}
\newcommand{\Dim}{\mathsf{Dim \,}}
\font\ef = eufb10
\newcommand{\ideal}[1]{\ensuremath{\text{\ef{#1}}}}
\newcommand{\grp}[1]{\ensuremath{{\mathsf{#1}}}}
\newtheorem{lemma}{Lemma}[section]
\newtheorem{theorem}[lemma]{Theorem}
\newtheorem{corollary}[lemma]{Corollary}
\newtheorem{property}[lemma]{Property}
\theoremstyle{definition}
\newtheorem{definition}[lemma]{Definition}
\theoremstyle{remark}
\newtheorem{remark}[lemma]{Remark}
\newcommand{\TC}{\ensuremath{\mathsf{Con}}}
\newcommand{\iss}{\ensuremath{\mathsf{iss}}}
\newcommand{\Mat}{\texttt{Mat}}
\newcommand{\Rep}{\texttt{Rep}}
\newcommand{\ccM}{\texttt{M}}
\newcommand{\Hom}{\textrm{Hom}}
\newcommand{\Image}{\textrm{Im}}
\newcommand{\GL}{\ensuremath{\mathsf{GL}}}
\newcommand{\Glob}{\ensuremath{\mathsf{Glob}}}
\newcommand{\id}{\mathbf{1}}
\title{Noncommutative Tangent Cones and Calabi Yau Algebras}
\author{Raf Bocklandt}
\address{Raf Bocklandt\\University of Antwerp\\Middelheimlaan 1\\B-2020 Antwerpen (Belgium)}
\email{rafael.bocklandt@ua.ac.be}
\thanks{The author is a Postdoctoral Fellow of the Fund for Scientific Research - Flanders (Belgium)}
\begin{document}

\begin{abstract}
We study the generalization of the idea of a local quiver of a representation of a formally smooth algebra, to broader classes
of finitely generated algebras. In this new setting we can construct for every semisimple representation $M$ a local model and a non-commutative tangent cone.
The representation schemes of these new algebras model the local structure and the tangent cone of the representation scheme of the original algebra at $M$. In this way one can try to classify algebras according to their local behavior. As an application we will show that the tangent cones of Calabi Yau $2$ Algebras are always preprojective algebras. For Calabi Yau $3$ Algebras the corresponding statement would be that the local model and the tangent cones derive from superpotentials. Although we do not have a proof in all cases, we will show that this will indeed hold in many cases.
\end{abstract}

\maketitle

\section{Introduction}

Recently, Calabi Yau Algebras were introduced as a noncommutative generalization of Calabi Yau Varieties and they 
form an important tool for physicists in topological string theory. It was noticed that Calabi Yau $3$-Algebras were tightly connected to superpotentials. 

For quotients of path algebras by homogeneous ideals it was proved in \cite{raf} that Calabi Yau $3$-Algebras are indeed algebras with a superpotential and $2$-Calabi Yau Algebras are preprojective algebras (in both cases the converse is not always true). 
There are however lots of algebras that do not fit in this framework (f.i. group algebras, filtered algebras, deformations of graded algebras, etc).
The aim of this paper is to find a way to connect these algebras to the graded case.

The road we will follow is the local study of representations of algebras. We will pursue generalizations
of local structure theorems by Le Bruyn and Procesi \cite{lebruynprocesi} and Crawley-Boevey \cite{CrawleyBoevey}. They proved that
for certain algebras (path algebras for the former, preprojective algebras for the latter), the \'etale local structure of
the representation space around a semisimple representation space can be modeled by a new
algebra which is of the same type (a path algebra or a preprojective algebra).
In this paper we will do a similar thing but instead of looking at the \'etale local structure we will work with formal
completions and tangent cones because this adapts more easily to the noncommutative case.

The commutative picture we want to generalize is the following. Let $X$ be an $n$-dimensional affine variety with coordinate ring $R$. For any point $p\in X$ we look at the corresponding maximal ideal $\ideal m$ and this gives us an $\ideal m$-adic filtration on $R$. Locally we can describe $X$ around $p$ using the formal completion $\hat R_\ideal m$.

We can also consider the associated graded ring $\gr_{\ideal m} R$ and this corresponds geometrically to the tangent cone, and it has the advantage that it is often easier to express in terms of generators and relations. A map $\phi$ between two varieties is \'etale at $p$ if the corresponding map between
the local rings of $p$ and $\phi(p)$ is an isomorphism. This is equivalent to asking that the map between the associated graded rings is an isomorphism
and hence the tangent cone is an important \'etale invariant.

In the setting of finitely generated noncommutative algebras we can define for every semisimple representation a noncommutative tangent cone and a local model in a similar way. Such a tangent cone will always be a graded path algebra of a quiver with relations. We will prove that if the original algebra is Calabi Yau $2$ then these noncommutative tangent cones need not to be Calabi Yau but they will be preprojective algebras. For Calabi Yau 3 algebras similar things happen and in many cases the local model or the noncommutative tangent cone comes from a superpotential. 

The paper is organized in the following way. We start with some preliminaries about graded and complete algebras, path algebras and Calabi Yau algebras.
In section \ref{tangentcone} we introduce the noncommutative tangent cone and in section \ref{repspaces} and \ref{slice} we study the connection with representation schemes and the slice results by Procesi, Le Bruyn and Crawley-Boevey. After that we prove our main result about the tangent cones of Calabi Yau algebras. We end the paper with some examples.

\section{Preliminaries}

\subsection{Graded and Complete Algebras}\label{gradedcomplete}

Let $S$ be a finite dimensional semisimple algebra over $\C$ and $W$ an $S$-bimodule. We define 
two tensor algebras: 
\se{
 &T_SW := S \oplus W \oplus W\otimes_S W \oplus \cdots = \{(a_i)_{i \in \N} | a_i \in W^{\otimes_S i}, a_i=0 \text{ if }i >\!\!>0 \} \\
 &\hat T_SW := S \times W \times W\otimes_S W \times \cdots = \{(a_i)_{i \in \N} | a_i \in W^{\otimes_S i}\} \\
 &(a_i) \cdot (b_i) := (\sum_{0\le k\le i} a_{i-k}\otimes_S b_{k})
 }
Both algebras have a standard projection onto $S$ and we will denote the kernels in both cases by $\ideal W$.
On the former algebra we will put the standard gradation and we will refer to it as the \emph{graded tensor algebra}, while on the latter
we will put the ${\ideal W}$-adic topology and call it the \emph{completed path algebra}.
The degree of an element in the graded algebra will be the maximal degree of its homogeneous components, for the completed algebra
we will define the degree as the minimal degree of its homogeneous components (i.e. $\degree f = \max\{i|f \in \ideal W^i\}$). Furthermore
if $f \in \hat T_S W$ we define $f_\mm \in T_S W$ as the homogeneous component of minimal degree.

To preserve the graded or topological structure of the algebras, the ideals we will consider in the two cases will be graded or closed.
We will call such an ideal $\ideal i$ in \emph{admissible} if it is contained
in $\ideal W^2$. From now on a \emph{graded or complete algebra} $A$ is an algebra of the form 
$T_SW/\ideal i$ or $\hat T_SW/\ideal i$ for some $S,W$ and 
an admissible ideal $\ideal i$. If we write $\ideal i = \<r_i, i \in \cI\>$, we mean that $\ideal i$ is the smallest graded or closed ideal containing
the $r_i$. We will also assume in this notation that the $\{r_i\}$ form a minimal set (i.e. every subset generates a smaller ideal) and
each $r_i$ satisfies $e_1r_ie_2$ for some minimal orthogonal idempotents $e_1,e_2$ in the center of $S$. The first idempotent $h(r_i):=e_1$ will be called the head of $r_i$ and $t(r_i):= e_2$ will be the tail of $r_i$. In the graded case we also suppose that the $r_i$ are homogeneous.

If we want to study the representation theory of graded or completed algebras we can introduce the category of finite dimensional nilpotent modules: $\TMod \hat A_{\ideal m}$. 
In both cases this is the full subcategory of the category of finite dimensional left modules $M$ such that $\rho_M:A \to \End_\C(M)$ factorizes
through some projection $ A \to  A/ \ideal W^i$.  Therefore we can see
$\TMod \hat A_{\ideal m}$ as a direct limit of ordinary finite dimensional module categories:
\[
\TMod \hat A_{\ideal m} \cong \lim_{\rightarrow} \Mod A/\ideal W \hookrightarrow \Mod A/\ideal W^2 \hookrightarrow \Mod A/\ideal W^3 \hookrightarrow \cdots
\]

It is possible that different algebras have the same category of nilpotent modules. We can solve this problem
to restrict to graded or completed \emph{path algebras} with relations.

A \emph{quiver} $Q$ is an oriented graph consisting of a set of vertices $Q_0$, a set of arrows by $Q_1$ and maps $h,t$ that assign to each arrow its head and tail. To a quiver we assign a semisimple algebra $S_Q=\C{Q_0}=\C^{\oplus Q_0}$ and we identify the minimal idempotents with the vertices. We can also construct an $S$-bimodule $W_Q=\C{Q_1}$ with multiplication $v_1av_2=a$ if $v_1=h(a)$ and $v_2=t(a)$ and zero otherwise.
We call $\C Q:=T_{\C Q_0}{\C Q_1}$ the \emph{graded path algebra} and $\C\hat Q:= \hat T_{\C Q_0}\C Q_1$ the \emph{completed path algebra}.
We will also consider graded or completed path algebras with relations which are the corresponding quotients by admissible ideals.
In fact graded or complete path algebras are just graded or complete algebras with a commutative semisimple part. 

We can turn each graded or complete algebra $A$ into a path algebra with relations in by standard Morita equivalence.
Let $S$ be the semisimple part of $A$ and take a minimal idempotent $e_i$ for each simple component of $S$:
$e_iSe_j=\C\delta_{ij}$ and $S e S =S$ with $e=\sum_i e_i$. It is well known $eAe$ is a path algebra with relations and
we have a functor $\TMod \hat A_{\ideal m} \to \TMod eAe: M \to eM$, which is an equivalence of categories.

On the other hand if two completed or graded path algebras with relations have equivalent categories of nilpotent modules
then they are isomorphic.
\begin{lemma}
A completed path algebra with relations $A$ is determined up to isomorphism by the category $\TMod \hat A_{\ideal m}$. 
\end{lemma}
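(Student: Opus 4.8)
The plan is to reconstruct $A$ as the inverse limit of its finite-dimensional truncations $A/\ideal W^n$, each of which I will recover purely from categorical data inside $\TMod \hat A_{\ideal m}$. The starting point is the identification $\TMod \hat A_{\ideal m} \cong \lim_{\rightarrow} \Mod A/\ideal W^n$ recorded above, so the category comes equipped with an ascending chain of full subcategories $\Mod A/\ideal W^n$. The crux is that this filtration, the truncated algebras, and the connecting maps between them can all be read off intrinsically, after which completeness of $A$ finishes the argument.

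First I would fix the combinatorial skeleton. The simple objects of $\TMod \hat A_{\ideal m}$ are exactly the one-dimensional modules $S_v$ indexed by the primitive idempotents $e_v$, so the vertex set $Q_0$ is determined by the category. Next I would characterize the subcategory $\Mod A/\ideal W^n$ without reference to $A$: since $\ideal W$ is the Jacobson radical of $A$, a nilpotent module $M$ satisfies $\ideal W^n M = 0$ precisely when its radical (= Loewy) series has length at most $n$, and the radical of an object is the intersection of its maximal subobjects, a notion internal to the abelian category. Hence each $\Mod A/\ideal W^n$ is singled out as the full subcategory of objects of Loewy length $\le n$.

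Within the subcategory $\Mod A/\ideal W^n$, which is the module category of the finite-dimensional algebra $A/\ideal W^n$, I would recover that algebra by the standard reconstruction: each simple $S_v$ has a projective cover $P_v^{(n)}$ in this subcategory, and $\bigoplus_{v} P_v^{(n)}$ is a projective generator whose endomorphism algebra returns $A/\ideal W^n$. Because $A$ is a path algebra with relations its semisimple part $S$ is commutative, so the idempotents $e_v$ are primitive and pairwise non-isomorphic; thus $A/\ideal W^n$ is basic and the endomorphism-ring reconstruction yields it up to \emph{isomorphism}, not merely Morita equivalence. This is precisely where being a path algebra, rather than a general graded or complete algebra, is used.

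Finally I would assemble the pieces. The inclusions $\Mod A/\ideal W^n \hookrightarrow \Mod A/\ideal W^{n+1}$ transport projective covers to projective covers and therefore induce, functorially, the canonical surjections $A/\ideal W^{n+1} \to A/\ideal W^n$; so the entire inverse system $(A/\ideal W^n)$ together with its transition maps is determined by $\TMod \hat A_{\ideal m}$. Since $A$ is complete in the $\ideal W$-adic topology we have $A \cong \lim_{\leftarrow} A/\ideal W^n$, and the isomorphism class of $A$ follows. The main obstacle I anticipate is not the limit step but verifying the intrinsic recovery at each finite level: one must check that Loewy length is genuinely detected inside the colimit category and that the projective covers $P_v^{(n)}$, which are \emph{not} projective in the ambient nilpotent category but only in the truncation, are correctly pinned down categorically, so that the reconstructed endomorphism algebras and their transition maps come out canonical.
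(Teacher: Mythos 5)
Your proposal is correct and follows essentially the same route as the paper: identify the subcategories $\Mod A/\ideal W^n$ intrinsically (your Loewy-length criterion is equivalent to the paper's iterated-extension filtration $\cC_n$), recover each truncation as the endomorphism algebra of the basic projective generator (the paper's $F_n$, the unique projective in $\cC_n$ with top $\Sigma=\bigoplus_v S_v$, is exactly your $\bigoplus_v P_v^{(n)}$), and pass to the inverse limit using completeness. The only cosmetic difference is that the paper explicitly takes opposite algebras $\Hom(F_n,F_n)^{op}$, which your "standard reconstruction" implicitly does.
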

\begin{proof}
We can reconstruct $A$ from $\TMod \hat A_{\ideal m}$ in the following canonical way. 
First define $\Sigma$ to be the semisimple object that is the direct sum of all simple modules with multiplicity one.
We can put a filtration on $\cC_1\subset \cC_2 \subset \cdots \subset \TMod \hat A_{\ideal m}$ by defining $\cC_1$ the subcategory of all semisimples and $M$
in $\cC_i$ if there exists an $N \subset M$ such that $N \in \cC_j$ and $M/N \in \cC_{i-j}$.

Each of these categories is in fact the category of modules
of a finite-dimensional algebra: $\cC_i = \Mod A/\ideal W^i$. The free $A/\ideal W^i$-module of rank one can be categorically defined
as the unique module that is projective in $\cC_i$ and has top $\Sigma$. We will denote this module by $F_i$.

Between the $F_i$ we can chose surjective maps $\dots \to F_2 \stackrel{\pi_1}{\to} F_1 \stackrel{\pi_0}{\to} F_0=\Sigma$.
If $A$ is complete, we can construct $A$ from the opposite algebras of morphisms:
\se{\lim_{\rightarrow}  \Hom(F_i,F_i)^{op} \ot \Hom(F_{i+1},F_{i+1})^{op}
&= \lim_{\rightarrow}  A/\ideal W^i \ot A/\ideal W^{i+1}\\
&= A.}

If $A$ is graded then we have to take the associated graded of this limit construction.
\end{proof} 

We will also need Ext-groups in $\TMod \hat A_{\ideal m}$ and projective resolutions for completed and graded path algebras with relations. The basic projective modules of $A=\C Q/\ideal i$ or $\C \hat Q/\ideal i$ are of the form $P_i= Ai$ where $i$ is a vertex of $Q$.
Every simple module in $\TMod \hat A_{\ideal m}$ has a minimal resolution of projective $A$-modules:
\[
\cdots \to P^3 \to P^2=\oplus_{t(r)=i}P_{h(r)}\stackrel{\cdot ra^{-1}}{\to} P^1=\oplus_{t(a)=i}P_{h(a)}\stackrel{\cdot a}{\to} P_i\to S_i \to 0
\]
and for every map $P^\ell\to P^{\ell-1}$, the image sits inside $\ideal WP^{\ell}$ and we have that
\[
 \Ext^\ell_{\TMod \hat A_{\ideal m}}(S_i,S_j) = \Hom_{A}(P^\ell,S_j).
\]

Finally, the associated graded of a complete path algebra with relations $\hat A$ is by definition $\gr \hat A := \oplus_{0\le i} {\ideal W^i}/{\ideal W^{i+1}}$. If $\hat A = \C \hat Q/\ideal i$ then  $\gr \hat A = \C Q/\gr \ideal i$ with $\gr \ideal i=\<f_\mm| f \in \ideal i\>$. 
Naively, one would think that the generators of this new ideal are the lowest degree parts of the generators of $\ideal i$, however this only holds for special choices of generator sets.
Such a set will be called \emph{gradable}. 
F.i. $\{XY+Z^3,YX+Z^3\}$ is not gradable because $\gr \ideal i = \<XY,YX,XZ^3-Z^3X, YZ^3-Z^3Y\>$, however the set 
$\{XY+XYX,YX+XYX\}$ is gradable because $XYX$ already sits in $\<XY,YX\>$ and hence these commutators as well.

Using this type of reasoning one can see that a set $\{r_i, i \in \cI \} \subset \ideal i$ is gradable if for every
linear combination $\sum_k a_k (r_{i_k})_{\mm} b_k$ that is zero in $\C Q$ we have that $(\sum_k a_kr_{i_k}b_k)_{\mm}$ is zero inside
$A_{\naive} = \C Q/\<r_{i\mm} \in \ideal i\>$. 
For each base element in each $\Ext_{A_{\naive}^3}(S_i,S_j)$ we can find a relation between the relations (the corresponding syzygy) and its suffices
to check it for these relations (as they generate the rest).

\subsection{Calabi Yau Algebras}

\begin{definition}
An algebra $A$ is Calabi Yau \emph{Calabi Yau of dimension $n$} 
if in the derived category of finite dimensional modules $\cD^b \mod A$ 
there are natural isomorphisms 
\[
\nu_{M,N}: \Hom_{\cD^b \Mod A}(M,N) \to \Hom_{\cD^b \Mod A}(M,s^n N)^*, \text{ ($^*$ is the complex dual)}
\]
here $M,N$ are complexes of modules and $s$ is the shift functor of the derived category.
\end{definition}
For further details about this property we refer to \cite{raf}, \cite{ginzburg}
In this paper we will only need the following result:

\begin{property}
If $A$ is Calabi Yau of dimension $n$ then 
\begin{itemize}
\item[C1]
The global dimension of $A$ is also $n$.
\item[C2] 
If $X, Y \in \Rep A$ then
\[
\Ext^k_A(X,Y) \cong \Ext^{n-k}_A(Y,X)^*.
\]
\item[C3]
The identifications above gives us a pairings $\<,\>^k_{XY}: \Ext^k_A(X,Y) \times \Ext^{n-k}_A(Y,X) \to \C$ which satisfy
\[
\<f,g\>^k_{XY} = \<1_X,g*f\>^0_{XX} = (-1)^{k(n-k)}\<1_Y,f*g\>^0_{YY}, 
\]
where $*$ denotes the standard composition of extensions.
\end{itemize}
\end{property}

To state the connection between graded algebras, we need some extra definitions:

\begin{definition}
\begin{itemize}
 \item[]
\item
If $Q$ is a quiver then the double of $Q$ is the quiver $Q^d\supset Q$ that  contains for every arrow $a \in Q_1$
an extra arrow $a^*$ with $h(a^*)=t(a)$ and $t(a^*)=h(a)$. The preprojective algebra of $Q$ is then defined as
\[
\Pi(Q^d) := \C Q^d / \<\sum_{h(a)=i} aa^* - \sum_{t(a)=i} a^*a, i \in Q_0\>
\]
\item
For a given quiver the vector space $\C Q/[\C Q,\C Q]$ has as basis the set of cycles up to cyclic permutation of the arrows. We can embed this space into $\C Q$ by mapping a cycle onto the sum of all its possible cyclic permutations:
\[
\cycle : \C Q/[\C Q,\C Q] \to \C Q: a_1\cdots a_n \mapsto \sum_i a_i\cdots a_na_1\cdots a_{i-1}.
\] 
The elements of this vector space are called superpotentials.

Another convention we will use is the inverse of arrows: if $p:= a_1\cdots a_n$ is a path and $b$ an arrow, then
$pb^{-1}=a_1\cdots a_{n-1}$ if $b=a_n$ and zero otherwise. Similarly one can define $b^{-1}p$. These new defined maps can be combined to obtain a 'derivation' 
\[
\partial_a : \C Q/[\C Q,\C Q] \to \C Q : p \mapsto \cycle(p)a^{-1} =a^{-1}\cycle(p).
\]

An algebra with a superpotential is an algebra of the form
\[
A_W := \C Q/\<\partial_a W, a \in Q_1\>\text{ with }W \in \C Q/[\C Q,\C Q]
\]

\end{itemize}
\end{definition}

\begin{theorem}[\cite{raf}]\label{calab}
Let $A$ be a graded path algebra with relations.
\begin{itemize}
\item
$A$ is Calabi Yau of dimension $2$ if and only if $A$ is the preprojective algebra of a non-Dynkin quiver 
\item 
If $A$ is Calabi Yau of dimension $3$ then $A$ is an algebra with a superpotential, but not all superpotentials
give rise to Calabi Yau Algebras.
\end{itemize}
\end{theorem}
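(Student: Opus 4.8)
The result is established in \cite{raf}; here I outline the strategy I would follow. The unifying idea is to read off the shape of $A$ from the minimal projective resolution of the simple modules together with the duality forced by the Calabi Yau property. For a graded path algebra with relations $A=\C Q/\ideal i$, the minimal resolution recalled above identifies
\[
\dim_\C\Ext^1_A(S_i,S_j)=\#\{a\in Q_1\mid t(a)=i,\ h(a)=j\},\quad \dim_\C\Ext^2_A(S_i,S_j)=\#\{r\mid t(r)=i,\ h(r)=j\},
\]
so the arrows of $Q$ compute $\Ext^1$, the minimal relations compute $\Ext^2$, and more generally the $\ell$-th syzygies compute $\Ext^\ell$. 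Property C1 pins down the length of the resolution, C2 turns it into a self-dual complex, and C3 controls the differentials; the whole proof is the translation of these three facts into combinatorial statements about $Q$ and $\ideal i$.

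For the two dimensional case, C1 gives global dimension $2$, so every simple has a resolution $0\to P^2\to P^1\to P_i\to S_i\to 0$. Applying C2 with $k=1$ gives $\Ext^1_A(S_i,S_j)\cong\Ext^1_A(S_j,S_i)^*$, so the number of arrows $i\to j$ equals the number of arrows $j\to i$; hence $Q$ is a double $Q^d$ and, using the perfect pairing of C3, the arrows split into dual pairs $(a,a^*)$. Applying C2 with $k=2$ gives $\Ext^2_A(S_i,S_j)\cong\Ext^0_A(S_j,S_i)^*=\delta_{ij}\C$, so there is exactly one relation at each vertex and it is a cycle $i\to i$. It then remains to identify this relation with the preprojective relation $\sum_{h(a)=i}aa^*-\sum_{t(a)=i}a^*a$. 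This is where C3 does the real work: the cyclic symmetry of the pairing forces the unique degree two relation at $i$, written in the dual basis $(a,a^*)$, to be exactly the moment map element, so $A\cong\Pi(Q^d)$. The Dynkin quivers are excluded because their preprojective algebras are finite dimensional and self-injective, hence of infinite global dimension, contradicting C1; conversely every non-Dynkin $\Pi(Q^d)$ carries the standard self-dual length two bimodule resolution, which directly yields C1--C3 and hence the Calabi Yau $2$ property.

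The three dimensional case runs along the same lines but with the matching shifted by one. Now C1 gives resolutions $0\to P^3\to P^2\to P^1\to P_i\to S_i\to 0$, and C2 with $k=2$ gives $\Ext^2_A(S_i,S_j)\cong\Ext^1_A(S_j,S_i)^*$, so the number of relations $i\to j$ equals the number of arrows $j\to i$. Thus there is exactly one minimal relation $R_a$ for each arrow $a$, and this matching of relations with arrows is precisely the index pattern of the relations $\partial_a W$ of a superpotential algebra. To produce the single element $W$ one uses that the Calabi Yau property makes the bimodule resolution of $A$ self-dual, so that the differential $P^3\to P^2$ is the transpose of $P^2\to P^1$; C3 then identifies all the $R_a$ as partial derivatives of one cyclically invariant element $W\in\C Q/[\C Q,\C Q]$, giving $A\cong A_W$. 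The converse fails because an arbitrary $W$ need not make the standard complex it defines exact, so $A_W$ may have the wrong global dimension and fail C1.

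The main obstacle in both cases is the last step: passing from the \emph{numerical} matching of $\Ext$ spaces supplied by C2 to the \emph{explicit} form of the relations. The dimension count alone only tells us how many arrows and relations sit at each vertex; to see that the relation is literally the moment element (resp.\ a partial derivative $\partial_a W$) one must track how the differentials of the minimal resolution interact with the Yoneda product, and invoke the naturality of $\nu$ encoded in C3 to guarantee that the pairing is compatible with the multiplication in $A$. Making this compatibility precise---essentially reconstructing the self-dual resolution from the pairing and showing the reconstructed differential is given by a single (super)potential---is the technical heart of the argument.
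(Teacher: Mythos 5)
This theorem is imported from \cite{raf} and the paper gives no proof of it, so there is nothing internal to compare against; your outline correctly reproduces the strategy of the cited reference, and it is also exactly the argument the paper itself deploys locally in the proof of Theorem \ref{calablocal} (reading off arrows and relations from $\Ext^1$ and $\Ext^2$ of the simples, using C2 to match them, and using the antisymmetric nondegenerate pairing from C3 to put the unique relation at each vertex into moment-map form after a symplectic change of basis). Your identification of the genuine technical content --- upgrading the numerical $\Ext$-matching to the explicit preprojective/superpotential form of the relations via cyclic compatibility of the C3 pairing with the Yoneda product --- is accurate, and the Dynkin exclusion via finite-dimensionality and self-injectivity is the standard correct argument.
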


\section{The noncommutative tangent cone}\label{tangentcone}

We suppose that $A=\C\<X_1,\dots X_k\>/\<F_1, \dots, F_l\>$ is a noncommutative finitely generated algebra with a finite number of relations. 
Let $M$ be a finite dimensional semisimple module of $A$ which can be decomposed in simples $M=S_1^{\oplus \eps_1}\oplus\dots \oplus S_k^{\oplus \eps_k}$.
The action of $A$ on $M$ gives us a map $\rho_M: A \to \End M$. The image of this map is 
$\End_\C S_1 \otimes 1_{\eps_1}\oplus \cdots \oplus \End_\C S_k\otimes 1_{\eps_k}$ and this is isomorphic to the semisimple algebra
\[
S = \Mat_{\dim S_1}(\C) \oplus \dots \oplus  \Mat_{\dim S_k}(\C).
\]
We will denote the idempotents in $S$ that correspond to the ones by $e_i$ and the kernels of the maps $A \to e_iS$ by $\ideal s_i$. 
Using this notation we have
the kernel of the map $A \to S$ is $\ideal m := \cap_i \ideal s_i$.

We can consider the $\ideal m$-adic filtration on $A$
\[
F_{i}A = 
\begin{cases}
\ideal m^{-i}& i<0\\
A&i>0.
\end{cases}
\]
This filtration gives rise to two new algebras: the associated graded $\gr_{\ideal m}A$ and the formal completion $\hat A_{\ideal m}$
\se{
\gr A &= \bigoplus_{i=0}^\infty \frac{\ideal m^{i}}{\ideal m^{i+1}}\\
\hat A_{\ideal m} &= \lim_{\leftarrow} (A/\ideal m \leftarrow A/\ideal m^{2} \leftarrow \dots)\\
&= \{(m_i)_{i \in \N}| m_i \in A/\ideal m^i, m^i + \ideal m^j = m^j \text{ if $i>j$}\}
}
There is a natural map $A \to \hat A$ and its kernel is $\cap_i \ideal m^i$.

In both new rings we can identify an ideal that substitutes $\ideal m$: 
\se{
\gr \ideal m &= \oplus_{i\ge 1} \frac{\ideal m^{i}}{\ideal m^{i+1}}\\
\hat {\ideal m} &= \lim_{\leftarrow} (\ideal m/\ideal m^{2} \leftarrow \ideal m/\ideal m^{3} \leftarrow \dots)
}

The first observation we can make is 
\begin{lemma}
$\gr_{\ideal m} A$ is a graded algebra and $\hat A_{\ideal m}$ a complete algebra in the sense
of section \ref{gradedcomplete}.
\end{lemma}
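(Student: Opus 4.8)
The plan is to show that each of the two constructions produces an algebra of the form described in Section \ref{gradedcomplete}, namely a quotient of a graded (resp. completed) tensor algebra $T_SW$ (resp. $\hat T_SW$) by an admissible ideal, where $S$ is the semisimple image of $\rho_M$ and $W$ is a suitable $S$-bimodule. The starting point is the presentation $A = \C\<X_1,\dots,X_k\>/\<F_1,\dots,F_l\>$ together with the surjection $A \to S$ whose kernel is $\ideal m$. The key structural fact I would use is that, because $S$ is a \emph{separable} (finite-dimensional semisimple over $\C$) algebra, the surjection $A \to S$ admits a splitting $\sigma: S \to A$ as algebras; this follows from the lifting of idempotents and the Wedderburn--Malcev type argument available in the complete/filtered setting. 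Such a splitting turns $A$ into an $S$-algebra and lets me regard $\ideal m$ as an $S$-bimodule.

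First I would construct the bimodule $W$. Using the splitting $\sigma$, identify $S$ with a subalgebra of $A$ and set $W := \ideal m/\ideal m^2$, which is naturally an $S$-bimodule since $S$ normalizes the filtration. The universal property of the tensor algebra then gives a canonical graded surjection $T_S W \to \gr_{\ideal m} A$, sending $W = \ideal m/\ideal m^2$ to the degree-one piece and extending multiplicatively; surjectivity holds because the images of $\ideal m/\ideal m^2$ generate $\gr_{\ideal m} A$ as an $S$-algebra, as every element of $\ideal m^i/\ideal m^{i+1}$ is a sum of products of $i$ elements of $\ideal m$. Writing $\ideal i$ for the kernel of this surjection gives $\gr_{\ideal m} A \cong T_S W/\ideal i$. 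It remains to check that $\ideal i$ is a graded ideal contained in $\ideal W^2$ (where $\ideal W = \ker(T_SW \to S)$): it is graded because the map $T_SW \to \gr_{\ideal m}A$ is a map of graded algebras, and it is admissible because the map is an isomorphism in degrees $0$ and $1$ by construction (degree $0$ is $S \to S$, degree $1$ is the identity on $\ideal m/\ideal m^2$), so no relations can appear below degree $2$.

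For the completed case the argument is parallel, replacing the graded tensor algebra by $\hat T_S W$ and using the $\ideal W$-adic topology. Here I would lift $W = \ideal m/\ideal m^2$ to a map $\hat T_S W \to \hat A_{\ideal m}$ by choosing set-theoretic lifts of a basis of $W$ into $\ideal m$ and invoking the universal property of the completed tensor algebra in the category of complete topological $S$-algebras; continuity and completeness guarantee the induced map is well-defined. One then checks this map is surjective (by successive approximation modulo $\ideal m^i$, using that the images generate each $\ideal m^i/\ideal m^{i+1}$) and that its kernel $\ideal i$ is a \emph{closed} ideal contained in $\ideal W^2$, again admissible since the map is an isomorphism on the associated gradeds in degrees $0$ and $1$. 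This exhibits $\hat A_{\ideal m}$ as $\hat T_S W/\ideal i$ with $\ideal i$ closed and admissible, which is exactly the definition of a complete algebra.

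The main obstacle I expect is the construction of the algebra splitting $\sigma: S \to A$ and, relatedly, making the lift $\hat T_S W \to \hat A_{\ideal m}$ genuinely multiplicative rather than merely additive. In the graded case the splitting is essentially free because $\gr_{\ideal m}A$ is already an $S$-algebra by its very construction, so the subtlety is mostly bookkeeping; but in the completed case one must be careful that the chosen lifts of the $S$-action are compatible with the algebra structure up to higher-order terms, and the separability of $S$ over $\C$ is precisely what guarantees the obstruction to lifting vanishes at each stage of the inverse limit. Everything else---surjectivity, gradedness/closedness of the kernel, and the containment in $\ideal W^2$---is routine once the $S$-algebra structure and the comparison maps are in place.
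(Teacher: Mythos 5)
Your proposal follows essentially the same route as the paper: realize $S$ inside the completion, take $W=\ideal m/\ideal m^2$, and produce surjections $T_SW\to\gr_{\ideal m}A$ and $\hat T_SW\to\hat A_{\ideal m}$ whose kernels are admissible. One assertion needs correcting, though: the claimed algebra splitting $\sigma: S\to A$ of $A\to S$ does \emph{not} exist in general --- e.g.\ for $A=\C[x]$ and $M$ the sum of the one-dimensional modules at $x=0$ and $x=1$, $S=\C\times\C$ contains a nontrivial idempotent that cannot lift to $\C[x]$. What actually exists, and what the paper constructs (via formal smoothness/separability of $S$, lifting through the nilpotent ideals $\ideal m^i/\ideal m^{i+1}$), is an embedding $S\to\hat A_{\ideal m}$; your own treatment of the completed case effectively does this, and neither the $S$-bimodule structure on $\ideal m/\ideal m^2$ nor the graded case requires a splitting into $A$ itself, so the argument survives once that sentence is repaired.
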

\begin{proof}
First we show that $\hat A_{\ideal m}$ contains $S$. We construct embeddings $S \to A/\ideal m^i$ by induction. For $i=1$ this is trivial
and the embedding $S \to A/\ideal m^i$ can be lifted to $A/\ideal m^{i+1}$ because $S$ is formally smooth \cite{quillen} and $\ideal m^i/\ideal m^{i+1}$ is nilpotent
in $A/\ideal m^{i+1}$. Hence we get a commuting diagram
\[
 \xymatrix{
S\ar@{^(->}[d]\ar@{->}[dr]\ar@{->}[drr]\ar@{->}[drrr]&&&\\
\frac{A}{\ideal m} &\frac{A}{\ideal m^2}\ar[l] &\frac{A}{\ideal m^3}\ar[l] &\dots \ar[l]
}
\]
which we can combine to an embedding $S \to \hat A_{\ideal m}$.
This establishes that $\gr_{\ideal m}A$ and $\hat A_{\ideal m}$ are both $S$-algebras.

We will now chose a set $\{m_\kappa\} \subset A$ such that 
the $\bar m_\kappa = m_\kappa + \ideal m^2$ form a minimal set of generators for $\ideal m/\ideal m^2$ as an $S$-bimodule
and each $\bar m_\kappa$ sits inside a $h(m_\kappa)\ideal m/\ideal m^2t(m_\kappa)$, where $h(m_\kappa),t(m_\kappa) \in \{e_1,\dots ,e_k\}$.
These can be use to construct maps
\se{
 T_S \frac{\ideal m}{\ideal m^2} \to \gr_{\ideal m}A &: \bar m_\kappa \to \bar m_\kappa\\
 T_S \frac{\ideal m}{\ideal m^2} \to \hat A_{\ideal m} &: \bar m_\kappa \to h(m_\kappa)m_\kappa t(m_\kappa).
}
The first map is trivially surjective. The second one is surjective because
all maps $T_S \frac{\ideal m}{\ideal m^2} \to A/\ideal m^i$ are.
\end{proof}
\begin{remark}
In the proof we did not assume that the set $\{m_\kappa\}$ was finite, however we will see further on that this will always be the case.
\end{remark}

\begin{definition}
\begin{itemize}
\item[]
\item 
The \emph{tangent cone}  of $A$ at $M$ is the unique graded path algebra $C_M A$ with relations that is Morita equivalent with $\gr_{ideal m}A$.
\item
The \emph{local model}  of $A$ at $M$ is the unique completed path algebra $L_M A$ with relations that is Morita equivalent with $\hat A_{\ideal m}$.
\item
The \emph{local quiver} of $A$ at $M$ is the quiver $Q_M$ that underlies both $C_M A$ and $T_M A$. Its vertices correspond to the isomorphism classes of simple
factors in $M$ and arrows to the $m_i$. 
\item
The \emph{local dimension vector} of $A$ at $M$ is the map $\alpha_M :(Q_M)_0 \to \N : S_i \to \eps_i$  that assigns to each vertex the multiplicity of the corresponding simple in $M$.
\end{itemize}
\end{definition}
From the construction and the Morita equivalence we can deduce that the associated graded of $L_MA$ at the ideal generated by the arrows is the algebra $C_MA$.

Now let $\Mod A$, $\TMod \gr_{\ideal m}A$ and $\TMod \hat A_{\ideal m}$  denote the categories of (nilpotent) finite dimensional
modules. We can embed $\TMod \hat A_{\ideal m}$ fully and exact in $\Mod A$ because
$\hat A_{\ideal m}/\hat {\ideal m}^i \cong A/\ideal m^i$. 
There is pure categorical description of this embedding.
\begin{lemma}
$\TMod \hat A_{\ideal m}$ is the full subcategory containing all modules whose composition factors are contained in $\{S_1,\dots,S_k\}$.
\end{lemma}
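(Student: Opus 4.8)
The plan is to translate membership in $\TMod \hat A_{\ideal m}$ into a condition on the $\ideal m$-adic behaviour of a module, and then read off its composition factors. Via the full exact embedding $\TMod \hat A_{\ideal m} \hookrightarrow \Mod A$ furnished by $\hat A_{\ideal m}/\hat{\ideal m}^i \cong A/\ideal m^i$, a finite-dimensional $A$-module $N$ lies in $\TMod \hat A_{\ideal m}$ exactly when its $A$-action factors through some $A/\ideal m^i$, i.e. when $\ideal m^i N = 0$ for some $i$. So the whole statement reduces to the equivalence: $\ideal m^i N = 0$ for some $i$ if and only if every composition factor of $N$ is one of $S_1,\dots,S_k$.

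The preliminary observation I would isolate first is that the simple $A$-modules annihilated by $\ideal m$ are precisely $S_1,\dots,S_k$. A simple module is killed by $\ideal m$ iff it is a simple module over $A/\ideal m = S$, and since $S = \Mat_{\dim S_1}(\C)\oplus \cdots \oplus \Mat_{\dim S_k}(\C)$ by construction, its simple modules are exactly the $S_i$ (these are the simple factors of $M$ by the very definition of $S$ as the image of $\rho_M$). I expect this identification to be the only content-bearing step; it is essentially forced by the definition of $S$, but it is what pins the admissible composition factors down to the prescribed list.

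For the forward direction I would assume $\ideal m^i N = 0$ and consider the finite filtration $N \supseteq \ideal m N \supseteq \cdots \supseteq \ideal m^i N = 0$. Each successive quotient $\ideal m^j N/\ideal m^{j+1}N$ is annihilated by $\ideal m$, hence is an $S$-module, so all its composition factors lie in $\{S_1,\dots,S_k\}$; combining across $j$ gives the same conclusion for $N$. For the converse, suppose every composition factor of $N$ lies in $\{S_1,\dots,S_k\}$ and pick a composition series $0 = N_0 \subset N_1 \subset \cdots \subset N_r = N$. Each $N_j/N_{j-1}$ is some $S_{i_j}$ and is therefore annihilated by $\ideal m$, so $\ideal m N_j \subseteq N_{j-1}$ for every $j$, and iterating yields $\ideal m^r N \subseteq N_0 = 0$. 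Hence the $A$-action factors through $A/\ideal m^r$, so $N \in \TMod \hat A_{\ideal m}$, which completes the argument.
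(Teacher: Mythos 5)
Your proof is correct and follows essentially the same route as the paper: both arguments reduce the lemma to the observation that a simple module annihilated by $\ideal m$ is a simple module over $S=A/\ideal m$, hence one of $S_1,\dots,S_k$. If anything, yours is the more complete write-up, since you also spell out the converse direction (a module all of whose composition factors lie in $\{S_1,\dots,S_k\}$ is killed by a power of $\ideal m$ via a composition series), which the paper's proof leaves implicit.
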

\begin{proof}
First note that submodules, kernels and cokernels of maps in $\TMod \hat A_{\ideal m}$
are also in $\TMod \hat A_{\ideal m}$. In particular this means that the factors of a decomposition series of 
a nilpotent module are also nilpotent.

If a simple module $N$ is nilpotent then $\ideal m^lN=0$ for some $l$. The simplicity of $N$ then implies that $\ideal mN=0$ 
and hence there is an $i$ such that $\ideal s_i N=0$ and hence $N \cong S_i$.
\end{proof}

The embedding $\TMod \hat A_{\ideal m} \hookrightarrow \Mod A$ is exact so we can relate the $\Ext$-groups of both categories:
every element in $\Ext^i_{\hat A_{\ideal m}}(U,V)$ corresponds to an exact sequence $W^\bullet$ in $\TMod \hat A_{\ideal m}$ which is also exact in $\Mod A$ and 
we can map it to the corresponding element in $\Ext^i_{\Mod A}(U,V)$. The mapping is well-defined because if $W^\bullet \to X^\bullet$ is a quasi-isomorphism in $\TMod \hat A_{\ideal m}$ then it also is a quasi-isomorphism in $\Mod A$.
This means we have canonical morphisms $\Ext^i_{\TMod \hat A_{\ideal m}}(U,V) \to \Ext^i_{\Mod A}(U,V)$.
\begin{lemma}\label{exts}
The canonical isomorphism $\Ext^i_{\TMod \hat A_{\ideal m}}(U,V) \to \Ext^i_{\Mod A}(U,V)$ is 
\begin{enumerate}
 \item a bijection if $i=1$.
 \item an injection if $i=2$.
\end{enumerate}
\end{lemma}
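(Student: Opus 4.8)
The plan is to exploit the description, established in the previous lemma, that $\Nscr := \TMod \hat A_{\ideal m}$ is precisely the full subcategory of $\Mod A$ consisting of those finite dimensional modules all of whose composition factors lie in $\{S_1,\dots,S_k\}$. Such a subcategory is closed under submodules, quotients and extensions, so it is a Serre subcategory; in particular it is abelian, the inclusion is exact and fully faithful, and its Yoneda $\Ext$-groups carry the usual long exact sequences. Throughout I would read $\Ext^i_\Nscr$ and $\Ext^i_{\Mod A}$ as Yoneda $\Ext$, so that the comparison morphism is induced by viewing an exact sequence in $\Nscr$ as an exact sequence in $\Mod A$; this is manifestly natural in both arguments and compatible with Yoneda splicing. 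The whole statement is then the degree $\le 2$ part of the general principle that for a Serre subcategory closed under extensions the comparison map is an isomorphism in degrees $0,1$ and a monomorphism in degree $2$.

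For (1), surjectivity is immediate from closure under extensions: given $0\to V\to E\to U\to 0$ in $\Mod A$ with $U,V\in\Nscr$, the middle term $E$ has all its composition factors among those of $U$ and $V$, hence $E\in\Nscr$ and the sequence already lives in $\Nscr$. Injectivity is immediate from fullness: an equivalence of two extensions in $\Mod A$ is an isomorphism of the middle terms commuting with the structure maps, and since both middle terms lie in $\Nscr$ this isomorphism is a morphism of $\Nscr$, so the two classes already coincide in $\Ext^1_\Nscr(U,V)$. Together these give the bijection in degree $1$.

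For (2), I would take a class represented by a $2$-extension
\[
\xi:\quad 0\to V\to E_2\stackrel{d}{\to} E_1\to U\to 0
\]
in $\Nscr$ whose image in $\Ext^2_{\Mod A}(U,V)$ vanishes. Cutting $\xi$ at the middle along $K:=\Image d=\ker(E_1\to U)$ splits it into two short exact sequences $\alpha: 0\to V\to E_2\to K\to 0$ and $\beta: 0\to K\to E_1\to U\to 0$, with $K\in\Nscr$ because $\Nscr$ is closed under submodules. Applying $\Hom_A(-,V)$ to $\beta$ produces the long exact sequence whose connecting map $\delta:\Ext^1(K,V)\to\Ext^2(U,V)$ sends the class of $\alpha$ to $\pm\xi$; thus the hypothesis says exactly that $[\alpha]$ lies in the image of the restriction $\Ext^1_{\Mod A}(E_1,V)\to\Ext^1_{\Mod A}(K,V)$ along $K\hookrightarrow E_1$.

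Now $K,E_1,V$ all lie in $\Nscr$, so by part (1) the vertical comparison maps in the commuting square relating the restriction maps $\Ext^1(E_1,V)\to\Ext^1(K,V)$ in $\Nscr$ and in $\Mod A$ are bijections. Hence $[\alpha]$ already lifts to $\Ext^1_\Nscr(E_1,V)$, and feeding this lift into the corresponding long exact sequence computed inside the abelian category $\Nscr$ shows that $\delta_\Nscr[\alpha]=\pm\xi=0$ in $\Ext^2_\Nscr(U,V)$, which is the desired injectivity. The main thing to get right is the naturality bookkeeping: that the splicing decomposition, the connecting homomorphisms and the comparison maps all commute, and that the long exact sequence is genuinely available inside $\Nscr$ — both of which are guaranteed by the Serre-subcategory description above. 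Finally, one should not expect surjectivity in degree $2$, since the two middle terms $E_1,E_2$ of a $2$-extension in $\Mod A$ need not lie in $\Nscr$ and in general cannot be replaced by objects that do.
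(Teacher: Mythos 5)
Your proof is correct, and for part (2) it takes a genuinely different route from the paper. Part (1) is essentially identical in both arguments: surjectivity because the middle term of a short exact sequence with nilpotent ends has its composition factors among those of the ends, and injectivity because the subcategory is full (so a splitting, or an equivalence of extensions, computed in $\Mod A$ is already a morphism of the subcategory). For part (2) the paper instead invokes a ``well-known criterion'' for the vanishing of a $2$-extension $0 \to V \to E_2 \to E_1 \to U \to 0$, namely the existence of an auxiliary module $J$ witnessing triviality, and then observes that this $J$ sits inside (a direct summand of) an extension of nilpotent modules and is therefore itself nilpotent, so the witness lives in $\TMod \hat A_{\ideal m}$; the criterion is stated somewhat loosely there, and the burden of the argument is carried by that black box. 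You avoid the black box entirely: you splice the $2$-extension at $K=\Image d$, identify its class as the image of $[\alpha]\in\Ext^1(K,V)$ under the connecting map of the Yoneda long exact sequence for $0\to K\to E_1\to U\to 0$, and then use the degree-$1$ isomorphism of part (1) together with the compatibility of restriction and splicing with the exact inclusion to transport the lifting of $[\alpha]$ back into the subcategory. This is the standard ``Serre subcategory'' argument; it is more self-contained and makes transparent exactly which formal properties (closure under subobjects and extensions, exactness of the inclusion, availability of the Yoneda long exact sequence in an arbitrary abelian category) are being used, at the cost of being a little longer. Your closing remark on why surjectivity fails in degree $2$ -- the middle terms $E_1,E_2$ of a $2$-extension in $\Mod A$ need not be nilpotent and cannot in general be replaced by nilpotent ones -- correctly identifies why the lemma stops at injectivity there.
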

\begin{proof}
If $0\to U \to E \to V \to 0$ is a short exact sequence in $\Mod A$ and $U,V \in \TMod \hat A_{\ideal m}$ then $E$ also sits inside
$\TMod \hat A_{\ideal m}$ because its set of composition factors is the union of those of $U$ and $V$. So for $i=1$ the morphism is surjective.
It is also injective because the direct sum of $U$ and $V$ is the same in both categories.

To prove the injectivity if $i=2$, we use a well known criterion to check whether an exact sequence is
trivial: $0 \to A \to B \to C \to D \to 0$ is trivial as an extension in $\Mod A$ if and only if there exists a $J \in \Mod A$ such that
$A \to \Image(A \to B) \oplus J \to B$ is a short exact sequence. But as $A,B \in \TMod \hat A_{\ideal m}$ then the middle term is also nilpotent and hence
$J$ as well, so $0 \to A \to B \to C \to D \to 0$ is trivial in $\TMod \hat A_{\ideal m}$.
\end{proof}
\begin{remark}
We can use the bijectivity of the first Ext-spaces to show that the number of arrows in the quiver is finite. The number of
arrows in $Q_M$ from $j$ to $i$ equals $\Ext_{L_MA}^1(\C i,\C j)$, where $\C i$ is the standard simple $L_MA$-module corresponding to the vertex $i$.
By Morita equivalence this equals to $\Ext_{\hat A}^1(S_i,S_j)$ and by bijectivity this 
equals $\Ext_{A}^1(S_i,S_j)$. This last space is finite dimensional because $A$ is finitely generated.
\end{remark}

In general the maps between the higher Ext-spaces are neither injective nor surjective, but we can obtain some surjectivity by extending $M$ to a
bigger semisimple module.
\begin{lemma}
If $A$ is an ext-finite algebra and $M$ a semisimple module then for a given $k \in \N$, there exists a semisimple $A$-module $N$ such that $M \subset N$ and
the maps $\Ext^l_{\TMod \hat A_{\ideal m}}(S_i,S_j) \to \Ext^l_{\Mod A}(S_i,S_j)$ are all surjective for all $S_i,S_j$ that are simple factors of $M$ and $l\le k$.
\end{lemma}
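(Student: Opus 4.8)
The plan is to exploit ext-finiteness to reduce the statement to a finite amount of data and then to enlarge $M$ just enough to accommodate that data. Fix the simple factors of $M$; for each ordered pair $(S_i,S_j)$ of them and each degree $l\le k$ the target $\Ext^l_{\Mod A}(S_i,S_j)$ is a finite-dimensional vector space, so I would choose a finite basis of each. Each basis class is, by the definition of Yoneda Ext, represented by some exact sequence $0\to S_j\to E_l\to\cdots\to E_1\to S_i\to 0$ in $\Mod A$. If these representatives can be taken with \emph{finite-dimensional} middle terms, then each $E_t$ has only finitely many composition factors; collecting all simple modules that occur, over the finitely many pairs, the finitely many degrees $l\le k$, and the finitely many basis elements, yields a finite set of simples. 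Letting $N$ be the direct sum of $M$ with one copy of each of these new simples defines a finite semisimple enlargement, with associated ideal $\ideal n$.

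Once $N$ is chosen this way, surjectivity is immediate: each chosen representative is now an exact sequence of finite-dimensional modules whose composition factors all lie among the simple factors of $N$, hence it is a sequence in $\TMod\hat A_{\ideal n}$ and defines a class in $\Ext^l_{\TMod\hat A_{\ideal n}}(S_i,S_j)$ mapping to the prescribed basis element. Because only finitely many conditions (one per basis element, per pair, per degree $\le k$) must be met at once, a single finite $N$ works for all of them simultaneously; this is also what guarantees, as anticipated earlier, that the enlargement stays a genuine finite semisimple module.

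The whole argument therefore rests on the one genuinely homological point, which I regard as the main obstacle: \emph{every class in $\Ext^l_{\Mod A}(S_i,S_j)$ admits a representative by an exact sequence of finite-dimensional modules.} For $l=1$ this is automatic, since an extension of $S_i$ by $S_j$ is finite-dimensional, which is reflected in the bijection of Lemma~\ref{exts}. My plan for general $l$ is to induct by dimension shifting: given a short exact sequence $0\to U'\to F\to S_i\to 0$ with $F$ finite-dimensional, the connecting map $\Ext^{l-1}_{\Mod A}(U',S_j)\to\Ext^l_{\Mod A}(S_i,S_j)$ splices a finite-dimensional $1$-extension onto a representative of its source, so a finite-dimensional representative in degree $l-1$ yields one in degree $l$. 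The finite-dimensional truncations $A/\ideal m^p$ are the natural building blocks for such $F$: they are finite-dimensional because, by the $i=1$ case of Lemma~\ref{exts} together with ext-finiteness, the generating set $\{m_\kappa\}$ is finite, so $\ideal m/\ideal m^2$ and hence every layer $\ideal m^p/\ideal m^{p+1}$ has finite dimension.

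The delicate step is to make the connecting map hit the prescribed class while keeping $F$ finite-dimensional. Concretely I would have to produce, at each stage, a finite-dimensional cover $F\twoheadrightarrow S_i$ (or of the current syzygy approximation) that is sufficiently acyclic for $\Hom_A(-,S_j)$ in the relevant range of degrees, so that the dimension shift is surjective onto the class being represented; the truncations $A/\ideal m^p$ for large $p$ are the candidates, and ext-finiteness is precisely what keeps the set of intervening composition factors finite. I expect this acyclicity-and-finiteness bookkeeping, rather than the collection argument of the first two paragraphs, to be where the real work lies.
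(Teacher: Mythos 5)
Your first two paragraphs are exactly the paper's proof: choose bases of the finitely many finite-dimensional spaces $\Ext^l_{\Mod A}(S_i,S_j)$, take an exact-sequence representative for each basis element, and let $N$ be the direct sum of $M$ with all composition factors of the modules occurring in these sequences. The ``main obstacle'' you isolate is vacuous in this paper's conventions, since $\Mod A$ is explicitly declared to be the category of \emph{finite-dimensional} modules, so every Yoneda representative already has finite-dimensional terms and hence finitely many composition factors; the dimension-shifting machinery of your last two paragraphs is therefore unnecessary (though harmless).
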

\begin{proof}
Chose bases for each of the spaces $\Ext^l_{\Mod A}(S_i,S_j)$ and find a representative for each basis element as an exact sequence.
Let $N$ be the direct sum of $M$ and all composition factors of the modules that occur in these exact sequences. 
\end{proof}
Such an $N$ as in the lemma above will be called an \emph{extension of $M$ towards surjectivity}.

\begin{theorem}\label{towardssurjectivity}
Suppose $N$ is a semisimple representation of $A$ and $L_N A = \C \hat Q_N/(r_i, i \in \cI)$. If $M$ is a submodule of $N$ then $Q_M$ will be the quiver
obtained by deleting the vertices of the factors that do not occur in $M$ and all arrows connected to them. There is
a standard projection $\pi: \C \hat Q_N \to \C \hat Q_M$ that identifies all common vertices and arrows and maps all the other vertices and arrows to zero. In this notation we have that
\[
 L_M A = \C Q_M/(\pi(r_i), i \in \cI)
\]
\end{theorem}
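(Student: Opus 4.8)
The plan is to exploit the categorical characterization of the local model from Lemma~\ref{exts} and its reconstruction in the earlier Lemma, reducing everything to a comparison of $\Ext$-groups and projective resolutions. First I would observe that $M \subset N$ as semisimple modules means precisely that the simple factors of $M$ form a subset of those of $N$, so by the Lemma characterizing $\TMod \hat A_{\ideal m}$, the category $\TMod \hat A_{\ideal m_M}$ is a full subcategory of $\TMod \hat A_{\ideal m_N}$: it consists of exactly those modules whose composition factors lie among the simples of $M$. This immediately justifies the combinatorial claim about the quiver $Q_M$: by the remark following Lemma~\ref{exts}, the arrows of $Q_M$ from $j$ to $i$ are counted by $\Ext^1_A(S_i,S_j)$, which is the same space whether computed relative to $M$ or to $N$ (the first $\Ext$ does not see the ambient set of simples, only the two simples involved). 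Hence the vertex set of $Q_M$ is the subset of $(Q_N)_0$ indexed by the simple factors of $M$, and the arrows are precisely those of $Q_N$ with both head and tail among these vertices, which is what deleting the other vertices and their incident arrows produces.

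Next I would set up the projection $\pi$ concretely. Writing $e_M = \sum_{S_i \subset M} e_i$ for the idempotent in $S_N$ cutting out the factors of $M$, the natural map is $\pi = e_M(-)e_M$, and the point is that $\C \hat Q_M \cong e_M \,\C \hat Q_N\, e_M$ compatibly with the $\ideal W$-adic topology: a path survives under $e_M(-)e_M$ exactly when all its arrows connect surviving vertices, i.e.\ when none of the deleted arrows appears. So $\pi$ is the algebra map sending common generators to themselves and the rest to zero, and it is continuous, hence descends to quotients by closed ideals. The substance of the theorem is then the identity $L_M A = \C \hat Q_M/\langle \pi(r_i), i \in \cI\rangle$, and I would prove it by comparing minimal projective resolutions of the simples.

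The core computation uses the explicit resolution recalled in Section~\ref{gradedcomplete}: for a simple $S_i$ the first syzygy term $P^1 = \oplus_{t(a)=i} P_{h(a)}$ records the arrows, and $P^2 = \oplus_{t(r)=i} P_{h(r)}$ records the relations, with $\Ext^2_{\TMod \hat A_{\ideal m}}(S_i,S_j) = \Hom(P^2,S_j)$. For simples $S_i,S_j$ both among the factors of $M$, I would argue that the minimal relations of $L_M A$ with head $i$ and tail $j$ are exactly the images $\pi(r)$ of those minimal relations $r$ of $L_N A$ whose head and tail already lie in $Q_M$: a relation of $L_N A$ supported (after projecting) entirely on surviving vertices and arrows maps to a genuine relation of $L_M A$, while any relation involving a deleted arrow maps to zero under $\pi$ and contributes nothing. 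Concretely, the relations are read off from $\Ext^2$, and the injectivity statement in part~(2) of Lemma~\ref{exts} together with the full-subcategory property lets me match the degree-two part of the resolutions on the nose: both $\Ext^2_{\TMod \hat A_{\ideal m_M}}(S_i,S_j)$ and the appropriate graded piece of $\Ext^2_{\TMod \hat A_{\ideal m_N}}(S_i,S_j)$ are computed inside the same ambient category $\Mod A$, so the relation spaces agree, and their generators are the $\pi(r_i)$.

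The step I expect to be the main obstacle is controlling relations that mix surviving and deleted arrows: I must show that throwing away the deleted arrows does not secretly delete or alter a relation that is needed on the surviving part. The danger is a relation $r_i$ of $L_N A$ whose leading behavior straddles both regions, so that $\pi(r_i)$ is neither zero nor a correct relation for $L_M A$. The resolution is that minimality of the relation set, combined with the idempotent decomposition $e_M r e_M$, forces each minimal relation to be homogeneous with respect to head/tail idempotents (each $r_i$ satisfies $r_i = h(r_i) r_i t(r_i)$ by the standing convention on generator sets), so a relation either lives entirely in the $e_M$-corner or is annihilated by $\pi$; there is no genuinely mixed minimal relation. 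Making this splitting precise, and checking that the surviving relations remain a \emph{minimal} generating set for the relation ideal of $L_M A$ (no new relations appear and none become redundant), is the technical heart; I would handle it by comparing the syzygy computation of Section~\ref{gradedcomplete} vertex by vertex and invoking the fullness of the subcategory embedding to transfer minimality from $N$ to $M$.
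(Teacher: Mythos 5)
Your opening move is the right one and coincides with the paper's: the essential observation is that $\TMod \hat A_{\ideal m_M}$ is the full subcategory of $\TMod \hat A_{\ideal m_N}$ (equivalently, of $\Mod A$) consisting of modules whose composition factors occur in $M$, and your $\Ext^1$ argument for the shape of $Q_M$ is fine. The gap is in how you identify the relations. You assert that the relation spaces "agree" because $\Ext^2_{\TMod \hat A_{\ideal m_M}}(S_i,S_j)$ and $\Ext^2_{\TMod \hat A_{\ideal m_N}}(S_i,S_j)$ both inject into $\Ext^2_{\Mod A}(S_i,S_j)$; but two injections into a common target need not have the same image, and here they genuinely differ. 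A minimal relation $r$ of $L_N A$ with head and tail in $Q_M$ can be supported entirely on paths through deleted vertices (e.g.\ $r=ab$ with $b\colon j\to w$, $a\colon w\to i$ and $w$ a deleted vertex), in which case $\pi(r)=0$ and the corresponding $\Ext^2$-class has no counterpart for $L_M A$; more generally $\pi(r)$ can become redundant. So the "matching on the nose" and the transfer of minimality from $N$ to $M$ that you identify as the technical heart are false in general --- and also unnecessary, since the theorem only claims that the $\pi(r_i)$ generate the relation ideal, not that they do so minimally. A second, independent problem is that even if the $\Ext^2$-dimensions did match, a dimension count cannot identify the relation ideal itself, and to exclude relations of $L_M A$ that are not in $\langle \pi(r_i)\rangle$ you would need a surjectivity statement for $\Ext^2$ that Lemma \ref{exts} deliberately does not provide (this is the whole point of the "extension towards surjectivity" construction).

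The paper sidesteps all of this with the reconstruction lemma of Section \ref{gradedcomplete}: a completed path algebra with relations is determined up to isomorphism by its category of nilpotent modules. One first notes that $\C\hat Q_M/(\pi(r_i))\cong L_N A/\langle e_i \mid S_i\not\subset M\rangle$ (killing the deleted idempotents kills exactly the deleted vertices and arrows, and the image of the relation ideal under the surjection $\pi$ is generated by the $\pi(r_i)$), and then that the nilpotent modules of this quotient are exactly the objects of $\TMod L_N A$ annihilated by the deleted idempotents, i.e.\ those whose composition factors lie among the simple factors of $M$ --- which is precisely $\TMod \hat A_{\ideal m_M}=\TMod L_M A$. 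The isomorphism follows with no resolution or syzygy analysis. To repair your argument, replace the $\Ext^2$ comparison by this categorical identification.
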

\begin{proof}
We have to show that $L_MA = L_NA/\<e_i | S_i \not \subset M\>$. 
As both are completed path algebras with relations we just have to show that their nilpotent module categories are equivalent.

The category of nilpotent $L_NA/\<e_i | S_i \not \subset M\>$-modules is the subcategory of $\TMod L_N A$ that contains
the modules that factorize through  $L_NA/\<e_i | S_i \not \subset M\>$. 
These are the modules that are annihilated by the $e_i$ with $S_i \not \subset M$, i.e. their composition factors
are submodules of  $L_NA/\<L_NA_{\ge 1},(\<e_i | S_i \not \subset M\>L_N A)\>$ but seen as a module
in $\T_N\Mod A \subset \mod A$. This exactly corresponds to the module $\oplus_{S_i \subset M} S_i$.
\end{proof}

\section{The connection with the representation schemes}\label{repspaces}

In this section is $A$ an algebra that is the quotient of a path algebra $\C Q$ with an ideal
$\ideal i=\<r_i | i \in \cI\>$ that is not necessarily graded but we suppose that $\ideal i \cap \C Q_0=0$ and that the $r_i \in e_{h_i}\C Qe_{t_i}$. 

To a dimension vector $\alpha: Q_0 \to  \N$ with $|\alpha|=n$, we can assign a vector space 
\[
\Rep_\alpha Q := \bigoplus_{a \in Q_1} \Mat_{\alpha_{h(a)}\times \alpha_{t(a)}}(\C)
\]
Equipped with the standard $\GL_\alpha:= \prod_{v \in Q_0} \GL_{\alpha_v}(\C)$ action by conjugation.
We will consider this object as a scheme over the complex numbers.
The points of this scheme can be seen as representations of $\C Q$ and the orbits under the action
are the isomorphism classes of representations.

The coordinate ring of this scheme is a polynomial ring that has for every arrow $a$ $\alpha_{h(a)}\alpha_{t(a)}$ variables corresponding to the entries in the matrix that represents it: $\C[\Rep_\alpha Q] = \C[f_a^{ij}| a \in Q_1]$.
If $p=a_1\cdots a_k \in v\C Qw$ we will denote the function it induces on $\Rep_{\alpha} Q$ by
\[
f_p^{ij} := \sum_{i_1,i_2,i_{k-1}} f_{a_1}^{ii_1}f_{a_2}^{i_1i_2}\cdots f_{a_k}^{i_{k-1}j}
\]
If $\ideal m$ is an ideal in $\C Q$ there is a corresponding ideal in $\C[\Rep_{\alpha} Q]$
by 
\[
 \ideal m_{\alpha}=\<f_r^{ij}| r \in v\ideal m, v,w \in Q_0\>
\]

We define the representation scheme $\Rep_\alpha A$ by its ring of functions:
\[
 \C[\Rep_{\alpha} A] = \C[\Rep_{\alpha} Q]/\ideal i_{\alpha}. 
\]
The points of the associated scheme can be seen as representations of $A$ and orbits of the induced action of $\GL_\alpha$ 
are the isomorphism classes.
The action defines a ring of invariants $\C[\iss_{\alpha} Q] := \C[\Rep_{\alpha} Q]^{\GL_{\alpha}}$ and from geometrical representation theory
we know that the embedding $\C[\iss_{\alpha} A] \subset \C[\Rep_{\alpha} A]$ corresponds to 
a quotient map of schemes $\Rep_{\alpha} A \to \iss_{\alpha} A$ which maps every representation to the isomorphism class of its semisimplifcation. 
The main problem in geometric representation theory is to study the geometry of this quotient map.

Another way to look at the ring above is using the $\alpha^{th}$ root of $A$ \cite{booklieven}. This is
the centralizer of $\Mat_n$ in the amalgamated product of $A$ and $\Mat_n$ over $\C Q_0$.
\[
 \sqrt[\alpha]A := (A *_{\C Q_0} \Mat_n)^{\Mat_n}
\]
$\Mat_n$ contains $\C Q_0$ by identifying the each vertex $v$ with a diagonal idempotent of rank $\alpha_v$.
(Reminder: $A *_C B$ is the universal algebra that has embeddings $\iota_A,\iota_B:A,B \hookrightarrow A*_C B$ that agree on the subalgebra $A$, $B$, 
such that any other pair of maps $\psi_A,\psi_B: A,B\to R$ that agree on $C$ factors through $A *_{C} B$.
It consists of linear combinations of products $a_1 * \cdots * a_\ell$ with $a_i \in A \cup B$
subject to the relations that $\cdots * a_j * a_{j+1}* \cdots= \cdots *(a_ja_{j+1} )* \cdots$ if $a_j,a_{j+1}$ are both in $A$ or both in $B$.)
From now on we will delete the subscript $\C Q_0$ in the expressions and assume that all $*$ are implicitly taken over $\C Q_0$.

The fact that the root is a centralizer is reflected in the following identity
\[
 \Mat_n(\sqrt[\alpha]A) = A * \Mat_n(\C).
\]
This can be used to define $\sqrt[\alpha]{\ideal i}$ if $\ideal i$ is an ideal of $A$ by
demanding
\[
 \Mat_n(\sqrt[\alpha]{\ideal i}) = \<\ideal i\>.
\]
where $\<\ideal i\>$ is the $A * \Mat_n(\C)$-ideal generated by the elements of $\ideal i \subset A \subset A * \Mat_n(\C)$. 

Using these notations, it can be checked that
\se{
 \C[\Rep_\alpha Q] \cong \frac{\sqrt[\alpha]{\C Q}}{[\sqrt[\alpha]{\C Q},\sqrt[\alpha]{\C Q}]},
 \C[\Rep_\alpha A] \cong \frac{\sqrt[\alpha]{A}}{[\sqrt[\alpha]{A},\sqrt[\alpha]{A}]},
 \ideal i_{\alpha} \cong \frac{\sqrt[\alpha]{\ideal i}}{[\sqrt[\alpha]{\C Q},\sqrt[\alpha]{\C Q}]}.
}
where we divide out the ideals generated by the commutators. 

The $\GL_{\alpha}$-action can also be defined naturally. $GL_\alpha \subset \Mat_n(\C)$ acts by conjugation on $\Mat_n(\C)$ and it fixes $\C Q_0$. We can extend this action to an action on $A * \Mat_n$ by letting it act trivially on $A$. This action
fixes the centralizer of $\Mat_n(\C)$ so we have a $\GL_{\alpha}$-action on $\sqrt[\alpha]{A}$. Finally commutators are mapped to commutators so
the action factors to an action on $\C[\Rep_{\alpha} Q]$.

This way of looking at things has the advantage that it is easily generalizable to the complete case so if 
$\hat A$ is a complete path algebra with relations we define its representation scheme as the scheme associated to the ring
$\C[\Rep_{\alpha}\hat A] := \sqrt[\alpha]{\hat A}/[\sqrt[\alpha]{\hat A},\sqrt[\alpha]{\hat A}]$. The quotient scheme
is again associated to the subring $\C[\iss_{\alpha}\hat A]$ of $\GL_{\alpha}$-invariant elements.

Let $M$ be a semisimple point in $\Rep_\alpha A$ which corresponds to a maximal ideal $\ccM \lhd \C[\Rep_{\alpha} A]$.
Denote the kernel of $\rho_M: A \to \End M$ again by $\ideal m$ and the image of this map by $S$.
The ideal $\ideal m_{\alpha}\subset \ccM$ is the ideal
whose points corresponds to representations $N \in \Rep_{\alpha} A$ such that $\rho_N$ factorizes through $\rho_M$. 
As $S$ is a semisimple algebra the number of isomorphism classes of representations in $\Rep_\alpha S$ is finite, so the points defined by $\ideal m_{\alpha}$ form a finite set of closed orbits in $\Rep_{\alpha} A$. This means that we can see $\ideal m_{\alpha}$ as the intersection
of ideals $\ideal o_i$ each one corresponding to one orbit $\cO(M_i)$ which we represent by a representation $M_i$.
If we decompose
$M$ as $S_1^{\oplus \eps_1}\oplus \dots \oplus S_1^{\oplus \eps_k}$, the $M_i$
form a maximal set of non-isomorphic modules $M_\eta = S_1^{\oplus \eta_1}\oplus \dots \oplus S_k^{\oplus \eta_k}$ with $\eta \in \N^k$ such that $M_\eta \in \Rep_{\alpha} A$. The particular $\eta$ that corresponds to $M$ itself is clearly the local dimension vector $\alpha_M:= \eps$.

For all these ideals above we can construct the associated graded.
$\gr_{\ccM} \C[Rep_\alpha A]$ will describe the tangent cone of $\Rep_\alpha A$ to $M$ while
$\gr_{\ideal o_i} \C[\Rep_\alpha A]$ describes the tangent cone to the orbit. Geometrically
this last scheme can be seen as a fiber bundle over $\GL_\alpha M_i$. Its fibers are
the unions of the tangent line perpendicular to the orbits:
\[
\TC_{\GL_{\alpha}M_i}\Rep_{\alpha} A = \{p + T_{q}\GL_\alpha M| p \in \TC_q\Rep_{\alpha}A, q \in \GL_{\alpha}M\}.
\]
Finally $\gr_{\ideal m_{\alpha}} \C[\Rep_{\alpha}A]$ is the direct sum of each of 
the $\gr_{\ideal o_i} \C[\Rep_\alpha A]$ and hence is corresponding scheme is the union of these 
tangent cones.

Now, we can make the connection with the representations of $\gr_{\ideal m}A$ and $\hat A_{\ideal m}$:
\begin{lemma}
If $A$ is an algebra over $\C Q$, $M$ a semisimple representation with corresponding ideals $\ideal m \lhd A$ and $\ideal m_\alpha \lhd \C[\Rep_\alpha A]$ then
\begin{itemize}
\item $\gr_{\ideal m_\alpha}\C[ \Rep_\alpha A] \cong \C[\Rep_\alpha \gr_{\ideal m} A],$
\item $\widehat{\C[ \Rep_\alpha A]}_{\ideal m_\alpha} \cong \C[\Rep_\alpha \hat A_{\ideal m}],$
\end{itemize}
\end{lemma}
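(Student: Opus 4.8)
The plan is to reduce everything to the $\alpha$-th root. We use $\C[\Rep_\alpha A]\cong\sqrt[\alpha]A/[\sqrt[\alpha]A,\sqrt[\alpha]A]$ and the analogous presentations of $\C[\Rep_\alpha\gr_{\ideal m}A]$ and $\C[\Rep_\alpha\hat A_{\ideal m}]$; here $\gr_{\ideal m}A$ and $\hat A_{\ideal m}$ are $S$-algebras and $\Rep_\alpha$ is formed relative to the $S$-module structure on $\C^{|\alpha|}$ supplied by $M$. Under these presentations $\ideal m_\alpha$ is the image of $\sqrt[\alpha]{\ideal m}$, so $\ideal m_\alpha^i$ is the image of $(\sqrt[\alpha]{\ideal m})^i$ and the $\ideal m_\alpha$-adic filtration corresponds to the $\sqrt[\alpha]{\ideal m}$-adic one. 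The statement then splits into two commutation results: that forming the root commutes with $\gr_{\ideal m}$ and with completion at $\ideal m$, and that the commutator quotient $R\mapsto R/[R,R]$ producing the coordinate ring commutes with $\gr$ and with completion.

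For the first, I would use $\Mat_n(\sqrt[\alpha]A)=A*_{\C Q_0}\Mat_n$ together with the Morita invariance of $\gr$ and completion to reduce to a statement about the free product: that the $\langle\ideal m\rangle$-adic associated graded of $A*\Mat_n$ is $(\gr_{\ideal m}A)*\Mat_n$ and its completion is the completed free product $\hat A_{\ideal m}\,\hat{*}\,\Mat_n$. Since $\Mat_n$ and $S$ sit in filtration degree $0$, the free product carries a normal-form basis of alternating reduced words in chosen $\C Q_0$-bimodule complements of $A$ and $\Mat_n$, and one filters this basis by the total $\ideal m$-degree of its $A$-letters; reading off the induced gradation gives $(\gr_{\ideal m}A)*\Mat_n$, and the same normal form controls the completion. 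Passing back through $\Mat_n(\sqrt[\alpha]{-})$ yields $\gr_{\sqrt[\alpha]{\ideal m}}\sqrt[\alpha]A\cong\sqrt[\alpha]{\gr_{\ideal m}A}$ and $\widehat{\sqrt[\alpha]A}_{\sqrt[\alpha]{\ideal m}}\cong\sqrt[\alpha]{\hat A_{\ideal m}}$.

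For the second commutation, with $R=\sqrt[\alpha]A$, I want $\gr(R/[R,R])\cong(\gr R)/[\gr R,\gr R]$ and its completed analogue. In the function picture this is the equality of leading-term ideals $\gr(\ideal i_\alpha)=(\gr\ideal i)_\alpha$ inside $\C[\Rep_\alpha Q]$, i.e. the assertion that forming leading terms commutes with $\Rep_\alpha(-)$. I would first settle the free case $A=\C Q$. Writing $\ideal n$ for the preimage of $\ideal m$, the algebra $\gr_{\ideal n}\C Q$ is itself a tensor algebra $T_S(\ideal n/\ideal n^2)$ over $S$ (the associated graded of the formally smooth algebra $\C Q$ at a semisimple quotient), while $\C[\Rep_\alpha Q]$ is a genuine polynomial ring; both $\gr_{\ideal n_\alpha}\C[\Rep_\alpha Q]$ and $\C[\Rep_\alpha\gr_{\ideal n}\C Q]$ are then polynomial rings that match generator by generator through the matrix entries of the chosen $\ideal m$-generators $m_\kappa$.

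The general case is obtained by pushing this identification through the quotient by $\ideal i_\alpha$, and here lies the main obstacle: associated graded and completion need not commute with quotients, so that a priori $\gr(\ideal i_\alpha)$ can be strictly larger than $(\gr\ideal i)_\alpha$ unless the generators are gradable in the sense of Section \ref{gradedcomplete}. The crux is to show that $\Rep_\alpha(-)$ carries a gradable generating set of $\ideal i$ to a gradable generating set of $\ideal i_\alpha$; I expect this to follow from the freeness of the polynomial ring $\C[\Rep_\alpha Q]$ together with the syzygy characterization of gradability, applied to the relations among the functions $f_r^{ij}$ induced by the syzygies among the $r_i$. For the completion statement the same freeness makes the $\ideal n_\alpha$-adic completion of $\C[\Rep_\alpha Q]$ flat, hence exact on $\ideal i_\alpha$, so that $\widehat{\C[\Rep_\alpha A]}_{\ideal m_\alpha}$ is the quotient of the completed polynomial ring by the completion of $\ideal i_\alpha$, which is precisely $\C[\Rep_\alpha\hat A_{\ideal m}]$.
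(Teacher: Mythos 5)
Your first half---presenting the coordinate rings via the $\alpha$-th root, proving that the $\langle\ideal m\rangle$-adic associated graded (resp.\ completion) of $A*_{\C Q_0}\Mat_n(\C)$ is $(\gr_{\ideal m}A)*\Mat_n(\C)$ (resp.\ its completed analogue), and descending to $\sqrt[\alpha]{A}$ via $\Mat_n(\ideal r)^k=\Mat_n(\ideal r^k)$---is exactly the paper's route; your normal-form filtration of reduced words is an acceptable substitute for the paper's explicit pair of mutually inverse maps $\pi$ and $\iota_k$.

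The gap is in the second half. Having correctly reduced to showing that $R\mapsto R/[R,R]$ commutes with $\gr$ and with completion for $R=\sqrt[\alpha]{A}$ filtered by $\sqrt[\alpha]{\ideal m}$, you abandon that formulation and restate it as the equality of leading-term ideals $\gr(\ideal i_\alpha)=(\gr\ideal i)_\alpha$ in $\C[\Rep_\alpha Q]$, where $\ideal i$ is the defining ideal of $A$ over $\C Q$. You rightly observe that only one inclusion is formal and that the reverse inclusion is precisely a gradability problem---but then you only say you ``expect'' it to follow from freeness and the syzygy criterion. That expectation is the entire content of the statement in your formulation, and nothing supplies it: gradability is a genuinely restrictive condition in Section~\ref{gradedcomplete}, invoked in the paper only much later and only for the preprojective relations, and there is no reason a general $\ideal i$ should induce a gradable generating set of $\ideal i_\alpha$. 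The paper never enters this territory; it proves the commutator step intrinsically by computing the degree-$k$ piece,
\[
\Bigl(\frac{\gr_{\ideal r}R}{[\gr_{\ideal r}R,\gr_{\ideal r}R]}\Bigr)_k=\frac{\ideal r^k}{([R,R]\cap\ideal r^k)+\ideal r^{k+1}}=\Bigl(\gr_{\frac{\ideal r}{[R,R]}}\frac{R}{[R,R]}\Bigr)_k,
\]
using only the two-sided ideal $[R,R]$ and the filtration, never the presentation of $A$ as $\C Q/\ideal i$. Your completion argument has the same defect: flatness of $\ideal n_\alpha$-adic completion over the Noetherian ring $\C[\Rep_\alpha Q]$ does express $\widehat{\C[\Rep_\alpha A]}_{\ideal m_\alpha}$ as a quotient of the completed polynomial ring, but identifying that quotient with $\C[\Rep_\alpha\hat A_{\ideal m}]$, which is defined through $\sqrt[\alpha]{\hat A_{\ideal m}}$, is again the assertion to be proved rather than a consequence of exactness. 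To repair the proposal, drop the detour through $\ideal i$ and prove the commutator-quotient commutation directly on the filtered ring $\sqrt[\alpha]{A}$.
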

\begin{proof}
We only prove the first statement, the proof of the second is very analogous.
The first step in the proof is to show that taking the associated graded or the completion commutes with the free product
\se{
 (\gr_{\ideal m} A) *_{\C Q_0} \Mat_n(\C) &\cong \gr_{\<\ideal m\>} (A *_{\C Q_0} \Mat_n(\C)).
}
We note that for every $i \in \N$ we have that 
$\< \ideal m\>^i$ is the linear span of products $c_1 * f_1 \cdots c_l * f_l$ with $c_i \in \Mat_n(\C)$ and
$f_j \in \ideal m^{\kappa_j}$ such that $|\vec \kappa|:=\sum_j \kappa_j \ge i$. The rewriting rules for the free product are such that
if we reduce the length of the product by removing a $c_j$ in $\C Q_0 \subset \Mat_n(\C)$ then $f_{j-1}c_jf_{j} \in \ideal m^{\kappa_{j-1}\kappa_{j}}$
so the sum of the $\kappa'$s does not change.
This fact implies $\< \ideal m\>^i \cap A= \ideal m^i$ and  $\< \ideal m\>^i \cap \Mat_n(\C) = 0$ and hence we have injections
\se{
\gr_{\ideal m} A \to \gr_{\<\ideal m \>}(A*\Mat_n(\C))\text{ and } \Mat_n(\C) \to \gr_{\<\ideal m \>}(A*\Mat_n(\C)).
}
By the universal property of the amalgamated product, these maps combine to a big map $\pi: \gr_{\ideal m} A * \Mat_n(\C) \to 
\gr_{\<\ideal m \>}(A*\Mat_n(\C))$ which is a map of graded algebras if we give $\gr_{\ideal m} A * \Mat_n(\C)$ the grading coming
from the grading of $\gr_{\ideal m} A$ and  consider $\Mat_n(\C)$ of degree zero.

We will show that this map is an isomorphism. The surjectivity of $\pi$ follows from the fact that
\[
 \pi( c_1*(f_1 + \ideal m^{\kappa_1})\cdots c_l*(f_l + \ideal m^{\kappa_l})) = c_1*f_1\cdots c_l*f_l + \<\ideal m\>^{\sum_i |\kappa_i|}
\]
For the injectivity we construct a right inverse for every homogeneous part.
So fix a $k \in \N$ and define
\se{
 \iota_k : &\gr_{\<\ideal m \>}(A*\Mat_n(\C))_k \to \gr_{\ideal m} A * \Mat_n(\C):\\ 
&c_0 *f_0 \dots c_l *f_l + \<\ideal m\>^{k+1} \mapsto
\sum_{|\vec \kappa|=k} c_1*(f_1 +\ideal m^{\kappa_1+1})\cdots c_n*(f_l + \ideal m^{\kappa_l+1})
}
The sum is taken over every vector $\vec \kappa \in \N^l$ such that the sum of its coefficients is $k$. Although this sum seems to consist of several terms
it has at most one nonzero term and this will be one coming from the vector $\vec \kappa$ we defined earlier.
The sum seems to depend on the length of the sequence (i.e. $l$) but if we can reduce the length because one of the $c_j$ or $f_j$ is in $\C Q_0$, it
is easy to check that the new sum will give the same answer (in case $f_j$ is a scalar this because $\kappa_j$ then must be zero, in the case
$c_j$ is a scalar we can use the product rule in $\gr_{\ideal m} A$).
Finally one can see that $\iota_k\pi|_{(\gr_{\ideal m} A * \Mat_n(\C))_k} = \id$.

The second step is to restrict to the $\alpha^{th}$ root. If $R$ is a ring and $\ideal r$ an ideal 
then 
\[
\gr_{\Mat_n(\ideal r)}\Mat_n(R) = \Mat_n(\gr_{\ideal r} R).
\]
This follows from the fact that $\Mat_n(\ideal r)^k = \Mat_n(\ideal r^k)$.
If we apply this to the case with $R= \sqrt[\alpha]{A}$ we get
\se{
(\gr_{\<\ideal m\>} A*\Mat_n(\C))^{\Mat_n(\C)} &= 
(\gr_{\Mat_n(\sqrt[\alpha]{\ideal m})}\Mat_n(\sqrt[\alpha]{A}))^{\Mat_n(\C)} \\
&= \Mat_n(\gr_{\sqrt[\alpha]{\ideal m}} \sqrt[\alpha]{A})^{\Mat_n(\C)} =.\gr_{\sqrt[\alpha]{\ideal m}} \sqrt[\alpha]{A}
}

The last step is the compatibility with commutators.
Let $R$ be a ring and $[R,R]$ the ideal generated by the commutators and $\ideal r$ an ideal of $R$.
We have to prove that
\[
 \frac{\gr_{\ideal r} R}{[\gr_{\ideal r} R,\gr_{\ideal r} R]} \cong \gr_{\frac{\ideal r}{[R,R]}}\frac{R}{[R,R]}.
\]
This can be done by a straightforward computation:
\se{
 (\frac{\gr_{\ideal r} R}{[\gr_{\ideal r} R,\gr_{\ideal r} R]})_k 
&=\frac{\ideal r^k/\ideal r^{k+1}}{([R,R]\cap \ideal r^k)/\ideal r^{k+1}}\\
&=\frac{\ideal r^k}{([R,R]\cap \ideal r^k)+\ideal r^{k+1}}\\
&=\frac{\ideal r^k/([R,R]\cap \ideal r^k)}{\ideal r^{k+1}/([R,R] \cap \ideal r^{k+1})}\\
&=(\gr_{\frac{\ideal r}{[R,R]}}\frac{R}{[R,R]})_k.
}
\end{proof}

Finally, to make the connection with the noncommutative tangent cone and the local model, we have to study what happens to the Morita equivalence.

\begin{lemma}\label{fibered}
\begin{itemize}
 \item[] 
\item $\Rep_{\alpha} \gr_{\ideal m}A = \bigsqcup_{\eta, M_\eta \in \Rep_{\alpha} A} \Rep_\eta C_MA \times_{\GL_\eta} \GL_{\alpha}$
\item $\Rep_{\alpha} \hat A_{\ideal m} = \bigsqcup_{\eta, M_\eta \in \Rep_{\alpha} A} \Rep_\eta L_MA \times_{\GL_\eta} \GL_{\alpha}$
\end{itemize}
\end{lemma}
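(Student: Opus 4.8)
The plan is to establish the statement in the following general form and then specialize: for any graded (resp.\ complete) $S$-algebra $B$ with semisimple part $S=\Mat_{\dim S_1}(\C)\oplus\cdots\oplus\Mat_{\dim S_k}(\C)$ and any dimension vector $\alpha$, the scheme $\Rep_\alpha B$ decomposes as $\bigsqcup_\eta \Rep_\eta(eBe)\times_{\GL_\eta}\GL_\alpha$, where $e=\sum_i e_i$ is the idempotent realizing the Morita equivalence $B\sim eBe$ and $\eta$ runs over the admissible $S$-types. Applying this to $B=\gr_{\ideal m}A$ (so that $eBe=C_MA$) and to $B=\hat A_{\ideal m}$ (so that $eBe=L_MA$) yields the two claims. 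The whole proof is thus the assertion that $\Rep_\alpha(-)$ is compatible with the standard Morita equivalence recorded just before the definition of the tangent cone; one could instead argue geometrically, combining the previous lemma with a Luna-type slice theorem (legitimate since, as noted in the introduction, the tangent cone is an \'etale invariant), but the algebraic route is cleaner and self-contained.

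First I would restrict along $S\subset B$ to get a $\GL_\alpha$-equivariant morphism $\Rep_\alpha B\to\Rep_\alpha S$. Since $S$ is semisimple, $\Rep_\alpha S$ is smooth and is a finite disjoint union of closed $\GL_\alpha$-orbits, one for each multiplicity vector $\eta$ satisfying $\sum_i\eta_i\beta^{(i)}=\alpha$, where $\beta^{(i)}$ is the dimension vector of $S_i$; these are exactly the $\eta$ with $M_\eta\in\Rep_\alpha A$ appearing in Section~\ref{repspaces}. Each such orbit is open and closed in $\Rep_\alpha S$, so pulling back gives a decomposition $\Rep_\alpha B=\bigsqcup_\eta\Rep_\alpha B^{(\eta)}$ into the open-and-closed loci on which the restricted $S$-module has type $\eta$. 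This is the scheme-level incarnation of the set-theoretic splitting $\ideal m_\alpha=\bigcap_\eta\ideal o_\eta$ into disjoint closed orbits.

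Next, fix $\eta$, choose a base point $s_\eta\in\Rep_\alpha S$ of type $\eta$, and note that $\Stab_{\GL_\alpha}(s_\eta)$ is the automorphism group $\prod_i\GL_{\eta_i}(\C)=\GL_\eta$ of the $S$-module $M_\eta$. The restriction $\Rep_\alpha B^{(\eta)}\to\GL_\alpha s_\eta\cong\GL_\alpha/\GL_\eta$ is a $\GL_\alpha$-equivariant affine morphism onto a homogeneous space, hence by descent along the $\GL_\eta$-torsor $\GL_\alpha\to\GL_\alpha/\GL_\eta$ it is automatically the associated bundle $F_\eta\times_{\GL_\eta}\GL_\alpha$, where $F_\eta$ is the scheme-theoretic fibre over $s_\eta$ with its residual $\GL_\eta$-action.

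It remains to identify $F_\eta\cong\Rep_\eta(eBe)$ equivariantly, which is the crux. A point of $F_\eta$ is a $B$-structure on $\C^{|\alpha|}$ extending the fixed $S$-action $s_\eta$; writing $B=T_SW/\ideal j$ (resp.\ $\hat T_SW/\ideal j$), such an extension is precisely an $S$-bimodule action of $W$ subject to $\ideal j$. Through the idempotents $e_i$, which present $S$ with multiplicities $\eta_i$, this bimodule datum descends by Morita theory to a representation of the path algebra $eBe$ on the multiplicity spaces $\C^{\eta_i}$, i.e.\ a point of $\Rep_\eta(eBe)$, and the $\GL_\eta$-action matches the base-change action. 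The point to handle with care --- and the main obstacle --- is that this must be an isomorphism of \emph{schemes}, not merely of point sets: one has to check that the ideal $\ideal j$ cutting out $F_\eta$ inside the affine space of $W$-actions corresponds exactly, under the Morita dictionary, to the relation ideal defining $\Rep_\eta(eBe)$, while keeping track of the grading (resp.\ the $\ideal W$-adic topology) and of the relation set itself. Once this compatibility of relations is verified, the two displayed identities follow by taking $eBe=C_MA$ and $eBe=L_MA$.
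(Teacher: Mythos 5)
Your proposal is correct and follows essentially the same route as the paper: project to $\Rep_\alpha S$ via $S\subset B$, split over the finitely many closed orbits to get the associated-bundle structure with structure group $\GL_\eta=\Stab(M_\eta)$, and identify the fibre with $\Rep_\eta(eBe)$ by Morita equivalence. The only step you flag as remaining --- that the fibre identification is an isomorphism of schemes respecting the relation ideals --- is handled in the paper by the concrete mutually inverse maps that strip and reinsert the zero rows and columns determined by $\rho_{M_\eta}(e)$, with uniqueness of the extension following because $A/\ideal m$ and $C_MA$ generate $\gr_{\ideal m}A$.
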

\begin{proof}
Again we only prove the first statement.
Let $e= \sum e_i$ be the idempotent in $A/\ideal m$ such that $e\gr_{\ideal m}A e=C_M A$.
Each of the  $M_\eta \in \Rep_{\alpha} A$ corresponds to the zero representation
of $C_MA$ with dimension vector $\eta$.

The injection $A/\ideal m\subset \gr_{\ideal m}A$ gives us a $\GL_{\alpha}$-equivariant projection $\pi :\Rep_{\alpha} \gr_{\ideal m}A \to \Rep_{\alpha} S$.
Let $X_\eta$ be the fiber of the image of $M_\eta$ under $\pi$.
We have that
\[
 \Rep_{\alpha}\gr_{\ideal m}A = \bigsqcup_{\eta, \sum \eta_j\alpha^j=\alpha} X_\eta \times_{\GL_\eta} \GL_{\alpha}
\]
This is clear from the fact that we can transform every representation of $Rep_{\alpha} A/\ideal m$ to one of the form $\pi(M_\eta)$.
The equivariantness of $\pi$ implies that every representation of $\Rep_{\alpha}\gr_{\ideal m}A$  can be transformed into one of the fibers $X_\eta$, so
each component becomes of the form $X_\eta \times_{\Stab M_\eta} \GL_{\alpha}$ and it is easy to check that $\Stab M_\eta=\GL_\eta$.

So it only remains to prove that $\Rep_{\eta} C_MA \cong X_{\eta}$. Without loss of generality we can assume that $\rho_{M_\eta}(e)$ is diagonal.
This means that every representation $\rho \in X_\eta$ contains certain rows and columns that are zero and if we strip those and restrict to $C_MA = e\gr_{\ideal m}Ae \subset e\gr_{\ideal m}Ae$ we get a representation of $C_MA$ with dimension vector $\eta$.

To show that the map $X_\eta \to \Rep_{\eta} C_MA$ has an inverse we reinsert the zero rows and columns and induce a $\gr_{\ideal m}A$-representation. We get a map $X_\eta \to \Rep_{\eta} C_MA: \rho \to \bar\rho$ with $\forall x \in C_MA: \forall s_1,s_2 \in A/\ideal m:\bar\rho(s_1xs_2)=\rho_{M_\eta}(s_1)\tilde\rho(x)\rho_{M_\eta}(s_2)$ , where the tilde
means that we reinsert the zeros at the proper places. $\bar \rho$ is uniquely defined because $A/\ideal m$ and $\C_MA$ generate $\gr_{\ideal m} A$ as an algebra.
\end{proof}

The two lemmas summarize to the following connection between noncommutative tangent cones and representation spaces.
\begin{theorem}
\begin{itemize}
\item[]
\item 
$\Rep_{\alpha_M} C_MA$ describes the fiber of the tangent cone to the orbit of $M$ in $\Rep_\alpha A$.
$\iss_{\alpha_M} C_MA$ describes the tangent cone to the image of $M$ in $\iss_\alpha A$.
\item
$\Rep_{\alpha_M} L_MA$ describes the fiber of the completization at the orbit of $M$ in $\Rep_\alpha A$.
$\iss_{\alpha_M} L_MA$ describes the completization at the image of $M$ in $\iss_\alpha A$.
\end{itemize}
\end{theorem}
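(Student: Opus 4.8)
The plan is to assemble the two preceding lemmas and then pass to $\GL_\alpha$-invariants. I would treat the graded (tangent cone) statements in full, since the completed (local model) statements follow by the identical argument with $\gr_{\ideal m}A$ replaced by $\hat A_{\ideal m}$ and the associated graded replaced by completion.

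First I would recall that the discussion preceding the lemmas identified $\gr_{\ideal m_\alpha}\C[\Rep_\alpha A]$ as the coordinate ring of the union $\bigsqcup_\eta \TC_{\GL_\alpha M_\eta}\Rep_\alpha A$ of tangent cones to the orbits $\cO(M_\eta)$, one cone per isomorphism class $M_\eta\in\Rep_\alpha A$. The lemma identifying $\gr_{\ideal m_\alpha}\C[\Rep_\alpha A]$ with $\C[\Rep_\alpha\gr_{\ideal m}A]$ rewrites this ring, and Lemma~\ref{fibered} decomposes the associated scheme as $\bigsqcup_\eta \Rep_\eta C_MA\times_{\GL_\eta}\GL_\alpha$. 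The two decompositions are indexed by the same set of admissible $\eta$ and are compatible with the $\GL_\alpha$-action, so I would match the cone $\TC_{\GL_\alpha M_\eta}\Rep_\alpha A$ with the component $\Rep_\eta C_MA\times_{\GL_\eta}\GL_\alpha$. Choosing $\eta=\alpha_M$ singles out the cone attached to the orbit of $M$ itself.

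Second, the component $\Rep_{\alpha_M}C_MA\times_{\GL_{\alpha_M}}\GL_\alpha$ is by construction an associated fibre bundle over $\GL_\alpha/\GL_{\alpha_M}$. Since $\GL_{\alpha_M}=\Stab M$ (as already checked in the proof of Lemma~\ref{fibered}), the base is exactly the orbit $\GL_\alpha M=\cO(M)$ and the fibre of the bundle is $\Rep_{\alpha_M}C_MA$. Comparing with the fibre-bundle description of $\TC_{\GL_\alpha M}\Rep_\alpha A$ recalled above yields the first assertion, that $\Rep_{\alpha_M}C_MA$ describes the fibre of the tangent cone to the orbit of $M$.

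Third, for the invariant-theoretic statement I would take $\GL_\alpha$-quotients. Because $\GL_\alpha$ is reductive, its Reynolds operator is an exact graded projection onto invariants, so forming invariants commutes with the $\ideal m_\alpha$-adic associated graded; hence the tangent cone to the image of $M$ in $\iss_\alpha A=\Rep_\alpha A\quot\GL_\alpha$ is cut out by $(\gr_{\ideal m_\alpha}\C[\Rep_\alpha A])^{\GL_\alpha}$, i.e. by the invariants of the component found above. Applying the standard identity $(Y\times_H\GL_\alpha)\quot\GL_\alpha\cong Y\quot H$ for an associated bundle, with $H=\GL_{\alpha_M}$ and $Y=\Rep_{\alpha_M}C_MA$, then gives $\iss_{\alpha_M}C_MA$, which is the second assertion. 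The completed case is obtained verbatim, now using that completion is flat and again commutes with the exact Reynolds operator. I expect the main obstacle to be precisely this invariant step: one must ensure that passing to $\GL_\alpha$-invariants genuinely commutes both with forming the associated graded (resp. the completion) and with the associated-bundle construction. Both facts rest on the reductivity of $\GL_\alpha$ and on the compatibility of the gradings and filtrations with the group action, which the two lemmas were already arranged to supply; the bookkeeping that matches the index $\eta=\alpha_M$ with the orbit $\cO(M)$ is then routine.
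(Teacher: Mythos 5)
Your proposal is correct and follows essentially the route the paper intends: the paper offers no separate proof, stating only that ``the two lemmas summarize to'' the theorem, and your argument is precisely that assembly --- matching the orbit-indexed decomposition of $\gr_{\ideal m_\alpha}\C[\Rep_\alpha A]$ with the disjoint-union decomposition of Lemma~\ref{fibered}, isolating the component $\eta=\alpha_M$, and reading off the fibre. The only addition is that you make explicit the invariant-theoretic step (Reynolds operator commuting with the associated graded/completion and the identity $(Y\times_H\GL_\alpha)\quot\GL_\alpha\cong Y\quot H$), which the paper leaves implicit; this is a correct and welcome filling-in rather than a different approach.
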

Although $L_MA$ gives more information about the local structure because there might be different complete algebras with the same
associated graded, it is in many cases more usefull to use $C_MA$. $C_MA$ is more easily describable in the path algebra formalism because
its relations are homogeneous, while the relations of $L_MA$ might not even be finite combinations of monomials. Moreover the properties we are interested in can be determined using the tangent cones, as is seen in the following corollaries.
\begin{corollary}
For every $M$ we denote the $\eta$ such that $M_\eta=M$ by $\alpha_M$ and we call it the local dimension vector of $M$.
The couple $(C_MA,\alpha_M)$ is called the tangent cone setting.
\begin{enumerate}
\item 
The dimension of $\iss_\alpha A$ around $M$ is the same as the dimension of $\iss_{\alpha_M} C_M A$.
\item
If $\iss_{\alpha_M} C_MA$ has only one irreducible component then $M$ is also contained in one irreducible component.
\item 
$\iss_\alpha A$ is smooth at $M$ if and only if $\iss_{\alpha_M} C_M A$.
\item
If $M$ is contained in a unique irreducible component then this component contains simple representations if and only if $\iss_{\alpha}C_MA$
contains simple representations.
\end{enumerate}
\end{corollary}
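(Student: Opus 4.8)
The key is the theorem immediately preceding the corollary, which identifies $\iss_{\alpha_M}C_MA$ with the tangent cone of $\iss_\alpha A$ at the image $p$ of $M$, and $\iss_{\alpha_M}L_MA$ with the formal completion there. Writing $R:=\cO_{\iss_\alpha A,p}$ for the local ring with maximal ideal $\ideal m$, the four assertions become standard statements relating the germ $\spec R$, its completion $\hat R$, and its tangent cone $\spec\gr_{\ideal m}R$. The plan is to translate each item into commutative algebra through this dictionary and invoke the corresponding local result, calling on the fibred description of Lemma~\ref{fibered} whenever a statement about actual representations, rather than their semisimplifications, is needed.

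For (1) I would simply cite that Krull dimension is preserved by passing to the associated graded, $\dim R=\dim\gr_{\ideal m}R$; the left side is the local dimension of $\iss_\alpha A$ at $p$ and the right side is the dimension of the cone $\iss_{\alpha_M}C_MA$, so there is nothing more to do. For (3) I would use the regularity criterion: $R$ is regular if and only if $\gr_{\ideal m}R$ is a polynomial ring, equivalently the tangent cone is an affine space, equivalently it is smooth at its vertex (the distinguished point $M_{\alpha_M}$). Over $\C$ regularity is geometric smoothness, so $\iss_\alpha A$ is smooth at $p$ exactly when $\iss_{\alpha_M}C_MA$ is, giving (3).

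For (2) the natural tool is the leading-term argument: if $\gr_{\ideal m}R$ is a domain then so is $R$, since for nonzero $a,b$ one has nonzero leading forms (using $\bigcap_i\ideal m^i=0$) whose product is again nonzero, whence $ab\neq0$; combined with faithful flatness of $R\to\hat R$ this forces $p$ to lie on a single component. The honest subtlety is that ``only one irreducible component'' concerns the reduced structure, whereas the leading-term argument wants $\gr_{\ideal m}R$ itself reduced (two surfaces tangent along a common plane have an irreducible but non-reduced tangent cone). I would therefore either record that the cone is reduced, or run the argument along the deformation to the normal cone, the flat family over $\mathbb{A}^1$ with generic fibre $\spec R$ and special fibre the tangent cone, and transfer irreducibility across it.

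The main obstacle is (4), because ``containing a simple representation'' is an \emph{open} condition, and open conditions need not survive specialisation to the special fibre of a degeneration. My plan is: the simple locus is open in $\iss_\alpha A$, in $\iss_{\alpha_M}L_MA$ and in $\iss_{\alpha_M}C_MA$, and since $M$ lies on a unique component $Z$ the tangent cone at $p$ is the tangent cone of $Z$. If $Z$ contains a simple, the simples are dense in $Z$, so $p$ lies in their closure, i.e.\ simples of $A$ accumulate at $M$; via Lemma~\ref{fibered} these correspond to simple representations of the local model $L_MA$ of dimension $\alpha_M$. Conversely a simple of $L_MA$ produces, through the same fibred correspondence, simples of $A$ arbitrarily close to $M$, and uniqueness of the component places them on $Z$. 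It remains to pass between $L_MA$ and $C_MA=\gr L_MA$, and this last transfer across the associated-graded degeneration is the step I expect to cost the most: one direction is openness of the simple locus in the total family, but the other requires showing that a generic simple survives as $t\to0$, which I would control by a dimension count using (1), together with the fact that the slice transverse to the orbit of $M$ detects the endomorphism ring and hence simplicity on both sides.
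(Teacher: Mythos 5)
Your handling of (1)--(3) is in line with the paper, which dismisses these three as ``easy consequences of the definition of tangent cones'' and gives no argument: the facts you invoke ($\dim R=\dim\gr_{\ideal m}R$ for (1), the criterion that $R$ is regular iff $\gr_{\ideal m}R$ is a polynomial ring for (3)) are exactly the intended ones. Your hesitation over (2) is in fact more careful than the source: as you observe, two branches meeting to first order produce an irreducible but non-reduced tangent cone, and neither the naive leading-form argument nor a transfer across the deformation to the normal cone settles set-theoretic irreducibility of the special fibre versus the general fibre; the paper offers nothing here, so flagging the issue is to your credit even though you do not close it.

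The genuine gap is in (4), and it sits exactly where you predicted, but your proposed repair does not work as stated. Your argument needs simple representations of $A$ accumulating at $M$ to ``correspond, via Lemma \ref{fibered}, to simple representations of $L_MA$ of dimension $\alpha_M$''. No such correspondence of actual points exists: the $\C$-points of $\Rep_{\alpha_M}L_MA$ are the nilpotent representations lying over the orbit of $M$ (the fibre of the completion), not nearby simples of $A$, and the paper explicitly warns, after its schematic picture, that the tangent cone bundle does not embed in $\Rep_\alpha A$ and that in general there is not even a morphism of schemes between the two. The same obstruction blocks your converse direction and the final passage from $L_MA$ to $C_MA$. The paper's proof of (4) avoids points entirely and is precisely the dimension count you only gesture at: using the fibred description of Lemma \ref{fibered} to substitute $\dim_M\Rep_\alpha A=\dim_0\Rep_{\alpha_M}C_MA+\dim\GL_\alpha-\dim\GL_{\alpha_M}$, and using (1) for the quotients, one gets
\[
\dim\GL_{\alpha}-\dim_M\Rep_\alpha A+\dim_M\iss_\alpha A \;=\; \dim\GL_{\alpha_M}-\dim_0\Rep_{\alpha_M}C_MA+\dim_M\iss_{\alpha_M}C_MA,
\]
so the generic stabilizer dimensions on the two sides agree, and the presence of simple representations in the unique component is read off from this number being $1$. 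To salvage your route you should replace the accumulation-of-simples step by this numerical criterion; density and openness of the simple locus cannot be transported across the associated-graded degeneration, and are not needed.
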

\begin{proof}
The first three statements are easy consequences of the definition of tangent cones. The last one needs a bit more elaboration.
We will prove that the dimension of the generic stabilizer around $M$ in $\Rep_\alpha A$ and around the zero representation in $\Rep_{\alpha_M} C_MA$ 
are the same using the standard formula for the dimension of the generic stabilizer.
\se{
\dim \text{gen Stab}A &=  \dim \GL_{\alpha} - \dim_M \Rep_\alpha A + \dim_M \iss_\alpha A  \\
&=  \dim \GL_{\alpha} -(\dim_0 \Rep_{\alpha_M} C_MA + \dim \GL_{\alpha} -\dim \GL_{\alpha_M}) + \dim_M \iss_{\alpha_M}C_MA  \\
&=  \dim \GL_{\alpha_M} - \dim_0 \Rep_{\alpha_M} C_MA + \dim_M \iss_{\alpha_M} C_MA  \\
&=\dim \text{gen Stab}C_MA
}
In this calculation we substituted $\dim_M \Rep_\alpha A$ by  $\dim_0 \Rep_{\alpha_M} C_MA + \dim GL_{\alpha} -\dim GL_{\alpha_M}$, this follows
from the fact that by theorem \ref{fibered} around $M$ $\Rep_{\alpha} A$ is a fibered product 
\end{proof}

We end this section with a schematic picture of the situation we described:
\begin{center}
\begin{picture}(0,0)%
\includegraphics{figcone.pstex}%
\end{picture}%
\setlength{\unitlength}{1657sp}%
\begingroup\makeatletter\ifx\SetFigFontNFSS\undefined%
\gdef\SetFigFontNFSS#1#2#3#4#5{%
  \reset@font\fontsize{#1}{#2pt}%
  \fontfamily{#3}\fontseries{#4}\fontshape{#5}%
  \selectfont}%
\fi\endgroup%
\begin{picture}(9429,7392)(484,-6901)
\put(3466,-4561){\makebox(0,0)[lb]{\smash{{\SetFigFontNFSS{5}{6.0}{\familydefault}{\mddefault}{\updefault}{\color[rgb]{0,0,0}Fiber at $p$}%
}}}}
\put(8731,-3121){\makebox(0,0)[lb]{\smash{{\SetFigFontNFSS{5}{6.0}{\rmdefault}{\mddefault}{\updefault}{\color[rgb]{0,0,0}$\cong$}%
}}}}
\put(2656,-5371){\makebox(0,0)[lb]{\smash{{\SetFigFontNFSS{5}{6.0}{\rmdefault}{\mddefault}{\updefault}{\color[rgb]{0,0,0}$0$}%
}}}}
\put(2521,-871){\makebox(0,0)[lb]{\smash{{\SetFigFontNFSS{5}{6.0}{\rmdefault}{\mddefault}{\updefault}{\color[rgb]{0,0,0}$p$}%
}}}}
\put(8596,-1051){\makebox(0,0)[lb]{\smash{{\SetFigFontNFSS{5}{6.0}{\rmdefault}{\mddefault}{\updefault}{\color[rgb]{0,0,0}$p$}%
}}}}
\put(8821,-5596){\makebox(0,0)[lb]{\smash{{\SetFigFontNFSS{5}{6.0}{\rmdefault}{\mddefault}{\updefault}{\color[rgb]{0,0,0}$0$}%
}}}}
\put(6211,-781){\makebox(0,0)[lb]{\smash{{\SetFigFontNFSS{5}{6.0}{\rmdefault}{\mddefault}{\updefault}{\color[rgb]{0,0,0}$/\!\!/\GL_{\alpha}$}%
}}}}
\put(4996,299){\makebox(0,0)[lb]{\smash{{\SetFigFontNFSS{5}{6.0}{\rmdefault}{\mddefault}{\updefault}{\color[rgb]{0,0,0}$\Rep_{\alpha}A$}%
}}}}
\put(6211,-5416){\makebox(0,0)[lb]{\smash{{\SetFigFontNFSS{5}{6.0}{\rmdefault}{\mddefault}{\updefault}{\color[rgb]{0,0,0}$/\!\!/\Stab_p$}%
}}}}
\put(2161,-6631){\makebox(0,0)[lb]{\smash{{\SetFigFontNFSS{5}{6.0}{\rmdefault}{\mddefault}{\updefault}{\color[rgb]{0,0,0}$\Stab_p$-orbits}%
}}}}
\put(541,254){\makebox(0,0)[lb]{\smash{{\SetFigFontNFSS{5}{6.0}{\rmdefault}{\mddefault}{\updefault}{\color[rgb]{0,0,0}$\GL_{\alpha}$-orbits}%
}}}}
\end{picture}%

\end{center}
It is important to note that although the picture suggests it, the tangent cone bundle does not embed as a scheme inside $\Rep_{\alpha}A$ and in general there need not to be a morphism of schemes between the bundle and the representation scheme. 

\section{Tangent Cones and Slice results}\label{slice}

For special types of algebras there are already results that relate the \'etale local structure
of the representation spaces with path algebras for quivers. We describe the two main examples below.

For the first we recall that a formally smooth algebra is an algebra $A$ that satisfies the following lifting property:
If $\phi; A \to B/\ideal i$ is an algebra morphism and $\ideal i$ is a nilpotent ideal, then we call lift $\phi$ to
a map $\tilde \phi:A \to B$.
\begin{theorem}[Le Bruyn]
If $A$ is a finitely generated formally smooth algebra an $M=S_1^{\oplus \eps_1} \oplus  \cdots \oplus S_k^{\oplus \eps_k}$ a semisimple representation
then there exists a quiver $Q_M$ and a dimension vector $\alpha_M$ such that 
There is a $\GL_{\alpha}$-equivariant morphism 
\[
\phi: \Rep_{\alpha_M} Q \times_{\GL_{\alpha_M}} \GL_{\alpha} \to  \Rep_{\alpha} A
\]
this morphism is \'etale at the point $(0,1)$ with $\phi(0,1)=M$.
\end{theorem}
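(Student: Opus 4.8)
The plan is to reduce the \'etaleness of $\phi$ at $(0,1)$ to an isomorphism of formal completions, and then to identify the two completions using Lemma \ref{fibered} together with the crucial fact that formal smoothness forces the local model to be a \emph{free} path algebra.

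First I would construct $\phi$. Because $A$ is formally smooth, its lifting property against nilpotent extensions transports to the representation functor, so $\Rep_\alpha A$ is a smooth affine $\GL_\alpha$-variety. Since $M$ is semisimple its orbit $\GL_\alpha\cdot M$ is closed with stabiliser $\GL_{\alpha_M}=\prod_i\GL_{\eps_i}(\C)$. Applying Luna's \'etale slice theorem at this closed orbit produces a $\GL_{\alpha_M}$-stable normal slice $N$ through $M$ and a $\GL_\alpha$-equivariant map $N\times_{\GL_{\alpha_M}}\GL_\alpha\to\Rep_\alpha A$ that is \'etale at $(M,1)$. The slice $N$ is the self-extension space $\Ext^1_A(M,M)$ as a $\GL_{\alpha_M}$-representation, and by the remark following Lemma \ref{exts} the multiplicity of $\Ext^1_A(S_i,S_j)$ matches the number of arrows of $Q_M$ between the corresponding vertices; this is precisely the data defining $\Rep_{\alpha_M}Q_M$, so $N\cong\Rep_{\alpha_M}Q_M$ as $\GL_{\alpha_M}$-varieties with $M\leftrightarrow 0$. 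This produces $\phi$.

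The conceptual crux is that formal smoothness makes the local model free, i.e. $L_MA=\C\hat Q_M$ with no relations. Indeed a formally smooth (quasi-free) algebra has Hochschild dimension at most one, so $\Ext^2_{\Mod A}(S_i,S_j)=0$ for all simple factors of $M$; by the injectivity in Lemma \ref{exts} this forces $\Ext^2_{\TMod\hat A_{\ideal m}}(S_i,S_j)=0$, and since the $P^2$-term of the minimal resolution of a simple is indexed by the relations, $L_MA$ carries no relations. To verify \'etaleness I would then use the criterion recalled in the introduction (a map is \'etale at a point exactly when it induces an isomorphism of formal completions) and the lemma identifying $\widehat{\C[\Rep_\alpha A]}_{\ideal m_\alpha}\cong\C[\Rep_\alpha\hat A_{\ideal m}]$. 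By Lemma \ref{fibered} the latter is a disjoint union of bundles $\Rep_\eta L_MA\times_{\GL_\eta}\GL_\alpha$; the component with $\eta=\alpha_M$ contains the orbit of $M$, and since $\Rep_{\alpha_M}L_MA=\Rep_{\alpha_M}\C\hat Q_M$ is just the formal neighbourhood of $0$ in the affine space $\Rep_{\alpha_M}Q_M$, this component is the completion of $\Rep_{\alpha_M}Q_M\times_{\GL_{\alpha_M}}\GL_\alpha$ at $(0,1)$. Matching it with the completion of $\Rep_\alpha A$ at $M$ yields the isomorphism induced by $\phi$.

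The step I expect to be the main obstacle is identifying the abstract completion isomorphism with the one actually induced by the geometric morphism $\phi$: one must check that the equivariant slice map produced by Luna's theorem is compatible, on completions, with the purely algebraic isomorphisms of Lemma \ref{fibered} and the completion lemma, so that the two descriptions of the formal neighbourhood of $M$ agree as pointed $\GL_\alpha$-schemes and not merely abstractly. Concretely, the construction of $\phi$ (the choice of slice, equivalently the lifts of the arrow generators $m_\kappa$ of $\ideal m/\ideal m^2$ into $A$ afforded by formal smoothness) has to be arranged so that the arrows of $Q_M$ correspond to these chosen generators; everything else is routine once freeness of $L_MA$ is in hand.
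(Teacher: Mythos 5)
This theorem is stated in the paper as a quoted result of Le Bruyn (and Le Bruyn--Procesi); the paper itself gives no proof, so there is nothing internal to compare against. Your argument is essentially the standard proof from the original source, and it is correct: formal smoothness makes $\Rep_\alpha A$ a smooth affine $\GL_\alpha$-variety, semisimplicity of $M$ makes its orbit closed with reductive stabiliser $\GL_{\alpha_M}=\prod_i\GL_{\eps_i}(\C)$, Luna's slice theorem at a smooth point lets you take the slice to be the normal space $T_M\Rep_\alpha A/T_M(\GL_\alpha\cdot M)\cong\Ext^1_A(M,M)=\bigoplus_{i,j}\Ext^1_A(S_i,S_j)\otimes\Hom_\C(\C^{\eps_i},\C^{\eps_j})$, and this is exactly $\Rep_{\alpha_M}Q_M$ as a $\GL_{\alpha_M}$-module. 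Two small remarks. First, your second and third paragraphs are logically redundant for the theorem as stated: Luna's theorem already delivers \'etaleness of the associated-bundle map, so the verification via $L_MA=\C\hat Q_M$ and Lemma \ref{fibered} is an (instructive, and correct) consistency check with the paper's completion formalism rather than a needed step; if you do want to run that route instead of invoking Luna's conclusion, you still need Luna (or some substitute) to produce $\phi$ as an actual algebraic morphism, since --- as the paper stresses after its tangent-cone picture --- the completion isomorphisms alone do not give a map of schemes into $\Rep_\alpha A$. Second, the identification of the normal space with $\Ext^1_A(M,M)$ deserves one line of justification ($T_M\Rep_\alpha A$ is the space of derivations $A\to\End_\C M$ and the orbit directions are the inner ones, so the quotient is $H^1(A,\End_\C M)\cong\Ext^1_A(M,M)$); with that supplied, the proof is complete.
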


In \cite{CrawleyBoevey} Crawley-Boevey proved the following result:
\begin{theorem}[Crawley-Boevey]\label{etalprep}
If $M \in \Rep_{\alpha} \Pi(Q^d)$ be a semisimple representation with decomposition
$S_1^{\oplus \eps_1} \oplus  \cdots \oplus S_k^{\oplus \eps_k}$ then there exists a new preprojective algebra $\Pi_M$ such that
there is a $\GL_{\alpha}$-equivariant morphism 
\[
\phi: \Rep_{\alpha_M} \Pi' \times_{\GL_{\alpha_M}} \GL_{\alpha} \to  \Rep_{\alpha} \Pi(Q)
\]
this morphism is \'etale at the point $(0,1)$ with $\phi(0,1)=M$.
The double quiver $Q_M$ underlying $\Pi_M$ has a vertex for each simple component, and the number of arrows from the $i^{th}$ to the $j^{th}$ vertex is
\[
\begin{cases}
2+ \sum_{a \in Q^d_1} \alpha^i(h(a)) \alpha^i(t(a)) - 2\sum_{v \in Q^d_0}\alpha^i(v) \alpha^i(v)  &i=j\\
\sum_{a \in Q^d_1} \alpha^i(h(a)) \alpha^i(t(a)) - 2\sum_{v \in Q^d_0}\alpha^i(v) \alpha^i(v) &i\ne j
\end{cases}
\]
\end{theorem}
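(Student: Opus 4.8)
The plan is to realise $\Rep_{\alpha}\Pi(Q^d)$ as the zero fibre of a moment map and then to apply Luna's slice theorem in its symplectic form. First I would observe that $\Rep_{\alpha}Q^d$ is canonically the cotangent space $T^{*}\Rep_{\alpha}Q$: each starred arrow $a^{*}$ is dual to $a$, so $\Rep_{\alpha}Q^d\cong\Rep_{\alpha}Q\oplus(\Rep_{\alpha}Q)^{*}$ carries the standard symplectic form $\omega$. The $\GL_{\alpha}$-action is the cotangent lift of its action on $\Rep_{\alpha}Q$ and is therefore Hamiltonian, with moment map $\mu\colon\Rep_{\alpha}Q^d\to\bigoplus_{v\in Q_0}\Mat_{\alpha_v}(\C)$ whose $v$-component is $\sum_{h(a)=v}aa^{*}-\sum_{t(a)=v}a^{*}a$. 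The defining relations of the preprojective algebra are exactly the vanishing of these components, so $\Rep_{\alpha}\Pi(Q^d)=\mu^{-1}(0)$.

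Since $M$ is semisimple, its orbit is closed in the smooth affine space $\Rep_{\alpha}Q^d$ and has stabiliser $\Stab_M=\GL_{\alpha_M}$ with $\alpha_M=(\eps_1,\dots,\eps_k)$. I would apply Luna's \'etale slice theorem in this ambient space to produce a $\GL_{\alpha_M}$-stable slice $N$ (the normal space to the orbit) together with a $\GL_{\alpha}$-equivariant map $N\times_{\GL_{\alpha_M}}\GL_{\alpha}\to\Rep_{\alpha}Q^d$ that is \'etale at $(0,1)$ and sends it to $M$. The crucial point, and the step I expect to be the main obstacle, is that slicing must be compatible with the moment map: because the action is a cotangent lift, the symplectic normal form (Marsden--Weinstein reduction performed on the slice) should show that $N$ is again a symplectic $\GL_{\alpha_M}$-representation, that $\mu$ pulls back to the slice moment map $\mu_N$, and hence that the \'etale map restricts to an \'etale map $\mu_N^{-1}(0)\times_{\GL_{\alpha_M}}\GL_{\alpha}\to\mu^{-1}(0)$ near $M$.

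It then remains to recognise $\mu_N^{-1}(0)$ as the representation scheme of a new preprojective algebra. The normal space splits symplectically as $N\cong\Ext^{1}_{\C Q}(M,M)\oplus\Ext^{1}_{\C Q}(M,M)^{*}$, which is manifestly of cotangent type $T^{*}\Rep_{\alpha_M}Q_M$ for the quiver $Q_M$ with $\dim\Ext^{1}_{\C Q}(S_i,S_j)$ arrows from $i$ to $j$; the dual summand supplies precisely the reversed arrows, so the ambient quiver is the double $Q_M^d$. Under this identification $\mu_N$ becomes the $\GL_{\alpha_M}$-moment map on $T^{*}\Rep_{\alpha_M}Q_M$, whose zero locus is cut out by the preprojective relations of $Q_M$, giving $\mu_N^{-1}(0)=\Rep_{\alpha_M}\Pi(Q_M^d)$ and thus $\Pi_M=\Pi(Q_M^d)$.

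Finally I would count the arrows of $Q_M^d$. The number from $i$ to $j$ is the dimension of the $(i,j)$-isotypic block of $N$, namely $\dim\Ext^{1}_{\C Q}(S_i,S_j)+\dim\Ext^{1}_{\C Q}(S_j,S_i)$. As $\C Q$ is hereditary one has $\dim\Ext^{1}_{\C Q}(S_i,S_j)=\delta_{ij}-\langle\alpha^i,\alpha^j\rangle$, where $\langle-,-\rangle$ is the Euler form of $Q$, so the total equals $2\delta_{ij}-(\alpha^i,\alpha^j)$ with $(-,-)$ the symmetrised form. Writing out $(\alpha^i,\alpha^j)=2\sum_{v}\alpha^i(v)\alpha^j(v)-\sum_{a\in Q^d_1}\alpha^i(h(a))\alpha^j(t(a))$ reproduces the two cases in the statement; equivalently, by the $2$-Calabi--Yau symmetry $\Ext^{1}_{\Pi}(S_i,S_j)\cong\Ext^{1}_{\Pi}(S_j,S_i)^{*}$ this count is $\dim\Ext^{1}_{\Pi}(S_i,S_j)$, confirming that $Q_M$ is a double.
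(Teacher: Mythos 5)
First, a point of reference: the paper offers no proof of this statement --- it is quoted from \cite{CrawleyBoevey} --- so there is no internal argument to compare yours against. Your overall strategy (realise $\Rep_{\alpha}\Pi(Q^d)$ as $\mu^{-1}(0)$ for the moment map of the cotangent-lifted $\GL_{\alpha}$-action on $T^{*}\Rep_{\alpha}Q$, slice at the closed orbit of $M$, and identify the symplectic slice with the representation scheme of a new preprojective algebra) is indeed the strategy of the cited source. You also correctly flag the genuinely delicate step: Luna's theorem is algebraic while the symplectic normal form is an analytic statement, and making the slice compatible with $\mu$ in the \'etale topology is precisely where Crawley-Boevey has to work; your proposal leaves that step as an expectation rather than an argument, so as written it is a sketch of the right proof rather than a proof.

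Beyond that acknowledged gap there is a concrete error in your identification of the symplectic normal space. You claim $N\cong\Ext^{1}_{\C Q}(M,M)\oplus\Ext^{1}_{\C Q}(M,M)^{*}$ and then use $\dim\Ext^{1}_{\C Q}(S_i,S_j)=\delta_{ij}-\langle\alpha^{i},\alpha^{j}\rangle$. The hereditary Euler-form identity is $\dim\Hom_{\C Q}(S_i,S_j)-\dim\Ext^{1}_{\C Q}(S_i,S_j)=\langle\alpha^{i},\alpha^{j}\rangle$, and $\dim\Hom_{\C Q}(S_i,S_j)$ is \emph{not} $\delta_{ij}$ in general: the $S_i$ are simple over $\Pi(Q^d)$, but their restrictions to $\C Q$ need be neither simple nor pairwise non-isomorphic. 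For the Jordan quiver, where $\Pi(Q^d)=\C[x,y]$, two distinct simples $(\lambda,\mu)\neq(\lambda,\mu')$ restrict to the same $\C[x]$-module, so your candidate block $\Ext^{1}_{\C Q}(S_i,S_j)\oplus\Ext^{1}_{\C Q}(S_j,S_i)^{*}$ is $2$-dimensional while the correct arrow count $2\delta_{ij}-(\alpha^{i},\alpha^{j})$ is $0$. The correct identification is $N\cong\ker d\mu_M/T_M(\GL_{\alpha}M)\cong\Ext^{1}_{\Pi(Q^d)}(M,M)$, whose blocks have dimension $2\delta_{ij}-(\alpha^{i},\alpha^{j})$ by Crawley-Boevey's formula $\dim\Ext^{1}_{\Pi}(X,Y)=\dim\Hom(X,Y)+\dim\Hom(Y,X)-(\dim X,\dim Y)$; the $2$-Calabi--Yau pairing makes this space symplectic, pairing the $(i,j)$-block with the $(j,i)$-block, and choosing a Lagrangian splitting of each such pair is what exhibits $N$ as $T^{*}\Rep_{\alpha_M}Q_M$ for a half-quiver $Q_M$. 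Your final numerical answer agrees with the theorem only because you substituted the incorrect value $\delta_{ij}$ for $\dim\Hom_{\C Q}(S_i,S_j)$; the derivation as written does not support it.
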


In both cases we have a picture like this:
\begin{center}
\begin{picture}(0,0)%
\includegraphics{figet.pstex}%
\end{picture}%
\setlength{\unitlength}{1657sp}%
\begingroup\makeatletter\ifx\SetFigFontNFSS\undefined%
\gdef\SetFigFontNFSS#1#2#3#4#5{%
  \reset@font\fontsize{#1}{#2pt}%
  \fontfamily{#3}\fontseries{#4}\fontshape{#5}%
  \selectfont}%
\fi\endgroup%
\begin{picture}(10394,8079)(484,-7588)
\put(2746,-1231){\makebox(0,0)[lb]{\smash{{\SetFigFontNFSS{5}{6.0}{\rmdefault}{\mddefault}{\updefault}{\color[rgb]{0,0,0}$p$}%
}}}}
\put(8551,-5461){\makebox(0,0)[lb]{\smash{{\SetFigFontNFSS{5}{6.0}{\rmdefault}{\mddefault}{\updefault}{\color[rgb]{0,0,0}$0$}%
}}}}
\put(8641,-3256){\makebox(0,0)[lb]{\smash{{\SetFigFontNFSS{5}{6.0}{\rmdefault}{\mddefault}{\updefault}{\color[rgb]{0,0,0}\'etale }%
}}}}
\put(8641,-1006){\makebox(0,0)[lb]{\smash{{\SetFigFontNFSS{5}{6.0}{\rmdefault}{\mddefault}{\updefault}{\color[rgb]{0,0,0}$p$}%
}}}}
\put(5041,254){\makebox(0,0)[lb]{\smash{{\SetFigFontNFSS{5}{6.0}{\rmdefault}{\mddefault}{\updefault}{\color[rgb]{0,0,0}$\Rep_{\alpha}A$}%
}}}}
\put(2791,-5551){\makebox(0,0)[lb]{\smash{{\SetFigFontNFSS{5}{6.0}{\rmdefault}{\mddefault}{\updefault}{\color[rgb]{0,0,0}$0$}%
}}}}
\put(5671,-1096){\makebox(0,0)[lb]{\smash{{\SetFigFontNFSS{5}{6.0}{\rmdefault}{\mddefault}{\updefault}{\color[rgb]{0,0,0}$/\!\!/\GL_{\alpha}$}%
}}}}
\put(5896,-5506){\makebox(0,0)[lb]{\smash{{\SetFigFontNFSS{5}{6.0}{\rmdefault}{\mddefault}{\updefault}{\color[rgb]{0,0,0}$/\!\!/\Stab(p)$}%
}}}}
\put(586,209){\makebox(0,0)[lb]{\smash{{\SetFigFontNFSS{5}{6.0}{\rmdefault}{\mddefault}{\updefault}{\color[rgb]{0,0,0}$\GL_{\alpha}$-orbits}%
}}}}
\put(2251,-7036){\makebox(0,0)[lb]{\smash{{\SetFigFontNFSS{5}{6.0}{\rmdefault}{\mddefault}{\updefault}{\color[rgb]{0,0,0}$Stab_p$-orbits}%
}}}}
\end{picture}%

\end{center}
Contrarily to the tangent cone the slice maps into $\Rep_{\alpha} A$ and as such it can be considered as a subvariety (modulo the technicalities from the \'etale topology). Also the map from the quotient of the slice to the quotient of the representation scheme is an \'etale map so it might be finite to one instead of one to one. This is illustrated by the second sheet in the quotient of the slice.

To work out the connection between the slice results and the noncommutative tangent cones, we need a reconstruction theorem
\begin{theorem}\label{recon}
If $B$ is a graded path algebra with relations and $\alpha$ is a dimension vector which is nowhere zero, then we can reconstruct 
$B$ from the graded rings $\C[\Rep_{k\alpha} B]$ and the maps
\[
\oplus : \Rep_{l\alpha} B \times \Rep_{k\alpha} B \to \Rep_{(k+l)\alpha} B 
\]
\end{theorem}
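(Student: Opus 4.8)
The plan is to recover $B=\C Q/\ideal i$ graded piece by graded piece, exploiting that the family $(\Rep_{k\alpha}B)_k$ equipped with the maps $\oplus$ encodes the full subcategory of $\rep B$ consisting of representations whose dimension vector is a multiple of $\alpha$, together with its forgetful functor $\omega$ to $\C Q_0$-graded vector spaces. The target is to identify, for each $d$, the finite dimensional space $B_d$ (with the multiplication $B_d\times B_e\to B_{d+e}$) as the space of natural endomorphisms of $\omega$ that are homogeneous of degree $d$ for the grading on the rings $\C[\Rep_{k\alpha}B]$. Since $B=\bigoplus_d B_d$ is graded and each $B_d$ is finite dimensional, reconstructing all of these spaces with their composition reconstructs $B$; the degree-zero part returns the semisimple $S=\C Q_0$ together with its idempotents, and the degree-one part returns the quiver $Q$ (arrows as an $S$-bimodule, read off from the isotypic decomposition of $\C[\Rep_{k\alpha}B]_1$ under the block torus $\prod_v\GL_{k\alpha_v}$, which is unaffected by the relations because $\ideal i\subseteq\ideal W^2$).

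The ingredients are recovered as follows. Objects are the points of $\Rep_{k\alpha}B$, and the underlying $\C Q_0$-graded space $\bigoplus_v\C^{k\alpha_v}$ is canonical. Morphisms are the delicate point: for a $k\alpha$-representation $V$ and an $l\alpha$-representation $W$ one recovers $\Hom_B(V,W)$ by using $\oplus$ to view $V\oplus W$ as a point of $\Rep_{(k+l)\alpha}B$, and noting that a $\C Q_0$-linear map $\phi\colon V\to W$ is an intertwiner precisely when conjugating $V\oplus W$ by the block-unipotent $\left(\begin{smallmatrix}1&0\\ \phi&1\end{smallmatrix}\right)\in\GL_{(k+l)\alpha}$ fixes it (the off-diagonal block of the conjugate is $\phi\rho_V-\rho_W\phi$). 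Because $\alpha$ is nowhere zero, any finite dimensional representation $V$ of dimension vector $\beta$ satisfies $\beta\le k\alpha$ for $k\gg0$, so $V$ is a retract of the padded point $V\oplus\bigoplus_v S_v^{\oplus(k\alpha_v-\beta_v)}$ of $\Rep_{k\alpha}B$; thus restricting to multiples of $\alpha$ loses no objects or morphisms. Faithfulness of the resulting map $B_d\to\{\text{degree-}d\text{ natural endomorphisms}\}$ is immediate: a nonzero $b\in B_d$ acts nontrivially on the finite dimensional regular representation $B/\ideal W^{d+1}$ since $B_d\cap\ideal W^{d+1}=0$, and by the padding this is visible on an actual $k\alpha$-representation. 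Concretely, this step also shows that the homogeneous relation space $\ideal i_d$ is exactly the set of degree-$d$ path combinations whose entry functions vanish on every $\Rep_{k\alpha}B$, using in addition $\bigcap_i\ideal W^i=0$ in the graded algebra.

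The part I expect to be the real obstacle is the \emph{fullness} half of this reconstruction, together with verifying that the conjugation criterion detects intertwiners and nothing more: one must show that every degree-$d$ family of operators that is natural with respect to all the morphisms recovered above actually arises from an element of $B_d$. This is a Tannaka-type density statement, and its proof should again be organised around the two hypotheses that do the work throughout, namely that $\alpha$ is nowhere zero (so that every module, in particular every finite dimensional quotient $B/\ideal W^{N}$ through which one wants to realise an operator, is a retract of a $k\alpha$-representation) and that $\bigcap_i\ideal W^i=0$ in the graded setting (so that the finite dimensional quotients separate elements of $B$ in each fixed degree). Once fullness is established, composition of natural endomorphisms supplies the multiplication and the reconstruction is complete.
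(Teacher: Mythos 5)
Your reduction of the theorem to a Tannaka-type statement is reasonable, and your faithfulness argument (padding $B/\ideal W^{d+1}$ to a $k\alpha$-representation and using $\bigcap_i\ideal W^i=0$) matches the closing step of the paper's proof. But the proposal has a genuine gap exactly where you flag ``the real obstacle'': the fullness/density half is the entire content of the theorem, and you give no argument for it beyond saying it should be organised around the same two hypotheses. It does not follow formally from them. A homogeneous degree-$d$ family of operators natural in the points of the $\Rep_{k\alpha}B$ is, for each $k$, a $\GL_{k|\alpha|}$-equivariant polynomial map $\Rep_{k\alpha}B\to\Mat_{k|\alpha|}(\C)$, and nothing in ``naturality plus $\alpha$ nowhere zero plus $\bigcap_i\ideal W^i=0$'' tells you what such maps look like. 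The missing input is the first fundamental theorem of invariant theory in the form of Procesi's reconstruction result: every $\GL_n$-equivariant polynomial map $\Rep_n B\to\Mat_n(\C)$ is generated by the evaluations $u_b:M\mapsto\rho_M(b)$ and the trace functions $t_b:M\mapsto 1_n\Tr\rho_M(b)$. This is where the paper's proof starts.

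Even granting Procesi's theorem, one must still eliminate the trace functions, and this is the second nontrivial step you would need. The paper compares the two ways of computing $f_l$ from $f_k$ for $l\mid k$ --- via the diagonal embedding $W\mapsto W^{\oplus k/l}$, which rescales every $t_b$ by $k/l$, and via zero-padding $W\mapsto W\oplus 0^{\oplus(k-l)}$, which replaces them by their value at $0$ --- and obtains a polynomial identity in $k/l$ of degree at most $\degree f$; choosing $k$ with enough divisors forces the $t_b$-dependence to vanish. Only then does one get $f_k=u_{b_k}$, after which the eventual-constancy argument you do sketch (realising $B/B_{>d}$ inside some $\Rep_{\ell\alpha}B$) takes over. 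Your stronger naturality requirement --- with respect to all intertwiners, detected by the unipotent-conjugation criterion --- would let you dispose of some trace terms more directly (naturality under the diagonal $V\to V^{\oplus 2}$ already kills a bare $t_b$), but mixed monomials in the $t_b$'s and $u_b$'s still have to be excluded, and you would end up rerunning essentially the paper's computation. As written, the proposal is an outline that stops just before the two steps that actually prove the theorem.
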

\begin{proof}
First of all we note that for any graded path algebra $B$ the variety $\Rep_{k\alpha} B\times_{\GL_{k\alpha}}\GL_{k|\alpha|}$ embeds
as an open and closed subset of $\Rep_n B$ with $n = k|\alpha|$. These embeddings are also compatible with taking the direct sum.

A collection $(f_k)$ of $\GL_{k|\alpha|}$-equivariant functions $f_k: \Rep_{k\alpha}\times_{\GL_{k\alpha}}\GL_{k|\alpha|} B \to \Mat_{k|\alpha|}$ is called a representable sequence if they are homogeneous of the same degree ($\exists \degree f \in \N: f_k(\lambda p)=\lambda^{\degree f} f_k(p)$) and
\[
\forall W_1 \in \Rep_{k\alpha}:\forall W_2 \in \Rep_{l\alpha} : f_{k+l}(W_1\oplus W_2) = f_k(W_1) \oplus f_l(W_2)~~(*).
\]
These sequences generate a ring which we will denote by $\RS(B)$. We will prove that $\RS(B) \cong B$.

From the reconstruction theorem of Procesi \cite{procesi} we know that every
$\GL_n$-equivariant function from $\Rep_n B$ to $\Mat_n(\C)$ is generated by functions of the form $u_b : M \mapsto \rho_M(b)$ and
$t_b : M \mapsto 1_n\Tr \rho_M(B)$. So if $(f)$ is a representable sequence then $f_k$ can be expressed as a noncommutative function
of $F(t_{b_1},u_{b_1},\dots,t_{b_r},u_{b_r})$.
We can use this expression to compute $f_l$ with $l|k$ in different ways. We can use the diagonal embedding of
$\Rep_{l\alpha} B \to \Rep_{k\alpha} B: W \mapsto W^{\oplus k/l}$; this will give us the expression $F(\frac kl t_{b_1},u_{b_1},\dots,\frac kl t_{b_r},u_{b_r})$.
Because $\C[\Rep_{\alpha} B]$ is graded, we can define the zero representation $0$ as the one corresponding to the maximal ideal of positive degree.
The embedding $\Rep_{l\alpha} B \to \Rep_{k\alpha} B: W \mapsto W\oplus 0^{\oplus k-l}$ provides us with another expression for $f_l$:
$F(0,u_{b_1},\dots,0,u_{b_r})$. 

Taking the difference of the two expressions, we obtain a matrix of equations for $k/l$, each at most of degree $\degree f$.
If we chose $k=\degree f!$ this equation has more solutions than its degree so we know that it is zero.
This implies that $F$ does not depend on functions of the form $t_{\dots}$. But $F(0,u_{b_1},\dots,0, u_{b_r}) = u_{F(0,u_{b_1},\dots,0,u_{b_r})}$, so
if $(f)$ is a representable function we can find a sequence $(b_k) \in B^{\N}$ such for $\forall k \in \N:f_k = u_{b_k}$. 

For a given $d= \degree f$ we could find an $\ell$ such that $B/B_{>d}$ is a submodule of a representation in $\Rep_{\ell\alpha}B$. Thus if $k \ge l$
we have that all $u_b, b \in B_d$ are different so $(b_k)$ is a sequence that is constant for large $k$. The diagonal embeddings $\Rep_{l\alpha}\to \Rep_{k\alpha}$  imply that we can even find a sequence $(b_k)$ that is completely constant. So map $B \to \RS(B): b \to (u_b)$ is an isomorphism. 
\end{proof}

\begin{theorem}
Suppose $A$ is a finitely generated algebra and $M$ an $n$-dimensional representation. If $B$ is a graded path algebra with relations such that 
for all $M^k$ there is a $\GL_{k\alpha}$-equivariant morphism 
\[
\phi_k: \Rep_{\alpha_{M^k}} B \times_{\GL_{k\alpha_M}} \GL_{k\alpha} \to  \Rep_{k\alpha} A
\]
which is \'etale at the point $(0,1)$ with $\phi(0,1)=M^k$ and compatible with direct sums i.e. $\oplus_A\circ(\phi_k,\phi_l)=\phi_{k+l}\circ \oplus_B$
with
\se{
\oplus_A &: \Rep_{k\alpha} A \times \Rep_{l\alpha} A \to \Rep_{k+l\alpha} A : (a_1,a_2) \to a_1 \oplus a_2 \\
\oplus_B &: \left(\Rep_{k\alpha_M} B \times_{\GL_{k\alpha_M}} \GL_{k\alpha}\right)\times \left(\Rep_{l\alpha_M} B \times_{\GL_{l\alpha_M}} \GL_{l\alpha}\right)\\ &\to \left(\Rep_{(k+l)\alpha_M} B \times_{\GL_{(k+l)\alpha_M}} \GL_{(k+l)\alpha}\right): (b_1,g_1)\times (b_2,g_2) \mapsto (b_1\oplus b_2, g_1\oplus g_2),
}
then $B \cong L_M A$.
\end{theorem}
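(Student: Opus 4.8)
The plan is to reduce the statement to the reconstruction theorem (Theorem \ref{recon}) by transporting the reconstruction data of $B$ to the local model $L_M A$ through the \'etale maps $\phi_k$. Since $M$ contains every simple with positive multiplicity, the local dimension vector $\alpha_M$ is nowhere zero, so Theorem \ref{recon} recovers $B$ as its ring of representable sequences $\RS(B)$. It therefore suffices to identify $\RS(B)$ with the corresponding reconstruction data on the $A$-side near $M$ and to show that the latter computes $L_M A$.

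First I would unwind the \'etale hypothesis. Being \'etale at $(0,1)$ with $\phi_k(0,1)=M^k$ means that $\phi_k$ induces an isomorphism of completed local rings $\hat\cO_{\Rep_{k\alpha}A, M^k} \cong \hat\cO_{\Rep_{k\alpha_M}B\times_{\GL_{k\alpha_M}}\GL_{k\alpha},\,(0,1)}$. On the $A$-side I would then feed this into the two lemmas of Section \ref{repspaces}: the completion isomorphism $\widehat{\C[\Rep_{k\alpha}A]}_{\ideal m_{k\alpha}} \cong \C[\Rep_{k\alpha}\hat A_{\ideal m}]$ together with the fibered Lemma \ref{fibered} identify the formal neighborhood of the orbit of $M^k$ with the component of $\Rep_{k\alpha}\hat A_{\ideal m}$ through $M^k$, namely $\Rep_{k\alpha_M}L_M A\times_{\GL_{k\alpha_M}}\GL_{k\alpha}$. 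Composing with the \'etale isomorphism gives, for every $k$, a formal identification of the $B$-bundle at $(0,1)$ with the $L_M A$-bundle at its zero representation.

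Next I would promote these pointwise identifications to an identification of reconstruction data. The crucial input is the hypothesis $\oplus_A\circ(\phi_k,\phi_l)=\phi_{k+l}\circ\oplus_B$: it guarantees that pulling back a $\GL$-equivariant matrix-valued function along the $\phi_k$ is compatible with the direct-sum maps, so a representable sequence on the $A$-side restricts to a representable sequence on the $B$-bundle and conversely. Because the fibered lemma also intertwines the direct sums on $\Rep\hat A_{\ideal m}$ with those on $\Rep L_M A$, the resulting bijection of representable sequences matches $\RS(B)$ with the analogous ring built from the $\Rep_{k\alpha_M}L_M A$. Rerunning the argument of Theorem \ref{recon} (Procesi reconstruction, the diagonal embeddings $W\mapsto W^{\oplus k/l}$ and $W\mapsto W\oplus 0^{\oplus k-l}$ to kill the trace terms, and eventual constancy of the resulting sequence $(b_k)$) on the $L_M A$-side then identifies this ring with $L_M A$ itself, yielding $B\cong\RS(B)\cong L_M A$.

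I expect the main obstacle to be the mismatch between the graded and the complete worlds: $\phi_k$ is \'etale only at the single point $(0,1)$, so a priori it transports only formal (completed) information, whereas Theorem \ref{recon} manipulates genuinely graded rings and their distinguished zero representations. The heart of the proof is therefore to check that the homogeneity condition defining representable sequences survives the \'etale transport — equivalently, that the conical structure of $\Rep_{k\alpha_M}B$ at its cone point corresponds to the $\hat{\ideal m}$-adic degree filtration on the $L_M A$-side — and that the comparison is uniform in $k$, so that the diagonal and zero embeddings used in Theorem \ref{recon} have counterparts after pullback. A secondary technical point is the bookkeeping of the disjoint-union decomposition of Lemma \ref{fibered}: one must isolate the single component $\eta=k\alpha_M$ passing through $M^k$ among the finitely many semisimple modules $M_\eta$ of total dimension $kn$.
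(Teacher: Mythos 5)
Your overall strategy --- combine the \'etale hypothesis with the direct-sum compatibility and feed the result into the reconstruction theorem \ref{recon} --- is the same as the paper's, but you stop exactly at the step that carries the content, and the route you sketch for it is not the one that works. You propose to compare \emph{completed} local rings, identify the formal neighborhood of the orbit of $M^k$ with $\Rep_{k\alpha_M}L_MA\times_{\GL_{k\alpha_M}}\GL_{k\alpha}$, and then ``rerun'' Theorem \ref{recon} on the $L_MA$-side. But Theorem \ref{recon} is a statement about \emph{graded} path algebras: the notion of representable sequence uses homogeneity, the distinguished zero representation is defined via the maximal ideal of positive degree, and the argument killing the trace terms is a degree count. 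None of this is available for the complete algebra $L_MA$, and you candidly defer the whole issue (``the heart of the proof is therefore to check\dots''). That deferred check is the proof, so as written there is a genuine gap.

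The paper resolves the graded/complete mismatch by not working with completions at all: an \'etale map induces an isomorphism of \emph{tangent cones}, so
\begin{equation*}
\TC_{(0,1)}\left(\Rep_{\alpha_{M^k}}B\times_{\GL_{k\alpha_M}}\GL_{k\alpha}\right)\cong \TC_{M^k}\Rep_{k\alpha}A\cong \Rep_{k\alpha_M}C_MA\times_{\GL_{k\alpha_M}}\GL_{k\alpha},
\end{equation*}
the last isomorphism coming from Lemma \ref{fibered} applied to $\gr_{\ideal m}A$ (isolating the component $\eta=k\alpha_M$, as you note). The decisive observation is that $B$ is graded, so $\Rep_{\alpha_{M^k}}B$ is a cone and equals its own tangent cone at $0$; hence $\Rep_{\alpha_{M^k}}B\cong\Rep_{\alpha_{M^k}}C_MA$ as graded schemes for every $k$, compatibly with the direct-sum maps (take associated graded of $\oplus_A$ and $\oplus_B$). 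Now Theorem \ref{recon} applies verbatim on the graded level and identifies $B$ with $C_MA$ --- no complete analogue of the reconstruction theorem is needed. If you want to salvage your version, this tangent-cone step is precisely the statement you flagged as ``the conical structure corresponds to the $\hat{\ideal m}$-adic filtration''; making it precise means replacing your completed local rings by their associated graded throughout, at which point you have reproduced the paper's argument.
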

\begin{proof}
Because the map is \'etale we have that
\se{
\TC_{\cO(0,1)} \Rep_{\alpha_{M^k}} B \times_{\GL_{k\alpha_M}} \GL_{k\alpha} &\cong  \TC_{\cO(M^k)}\Rep_{k\alpha} A\\
\TC_0\Rep_{\alpha_{M^k}} B \times_{\GL_{k\alpha_M}} \GL_{k\alpha} &\cong  \Rep_{k\alpha_M} C_MA\times_{\GL_{k\alpha_M}} \GL_{k\alpha}\\
\text{so }\TC_0\Rep_{\alpha_{M^k}} B \cong \Rep_{\alpha_{M^k}} B  &\cong  \Rep_{\alpha_{M^k}} C_M A
}
It is clear from the construction that this map is compatible with the graded structure on the rings.
Finally we have to check that there is a compatibility with the direct sum maps, but this follows directly from taking
the associated graded maps of the direct sums $\oplus_A$ and $\oplus_B$.
\end{proof}

The existence of such an algebra $B$ satisfying the condition of this theorem for general $A$ and $M$ is not clear. Even in the commutative case it can go wrong because not every singularity has an \'etale map to it coming from its tangent cone. 
The theorem \ref{etalprep} is hence a stronger than the corresponding theorem \ref{calablocal} but the latter applies in many more cases than the former and can be seen as a usefull generalization. 

It is of crucial importance that the \'etale morphism exists for all the $M^{\oplus k}$ to have the isomorphism.
In many cases there is an isomorphism just for $M$, f.i. $\C\<X,Y\>$ and $\C[X,Y]$ have even isomorphic representation spaces if $n=1$, but
their non-commutative tangent cones are non-isomorphic (in every point $W \in \Rep_1 \C[X,Y]$, $C_W \C[X,Y]=\C[X,Y]$ and $C_W \C\<X,Y\>=\C\<X,Y\>)$.

\section{Tangent Cones of Calabi Yau Algebras}

In this section we will prove that the tangent cones of CY2-algebras are preprojective algebras. We will also discuss
some of the extra troubles that occur in the 3-dimensional case.

\begin{theorem}\label{calablocal}
If $A$ is $2$-Calabi Yau and $M$ is a semisimple $A$-module then $C_M A$ is a preprojective algebra.
\end{theorem}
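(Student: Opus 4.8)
The plan is to extract the local quiver $Q_M$ and the relations of $C_MA$ directly from the Calabi--Yau duality on $A$ (properties C2 and C3) and to recognise them as the defining data of a preprojective algebra. Concretely I would first show that $Q_M$ is the double $Q^d$ of some quiver $Q$, then build a surjection $\Pi(Q^d)\twoheadrightarrow C_MA$ by identifying the lowest degree parts of the relations of $L_MA$ with the preprojective relations, and finally upgrade it to an isomorphism by a Hilbert series comparison. Throughout I would write $S_i$ for the simple factors and use the Yoneda product $*$ together with the pairings $\<,\>^k$ of C3.

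\emph{The quiver.} By the remark following Lemma \ref{exts} the number of arrows of $Q_M$ between two vertices equals $\dim\Ext^1_A(S_i,S_j)$, because the comparison map $\Ext^1_{\TMod\hat A_{\ideal m}}(S_i,S_j)\to\Ext^1_{\Mod A}(S_i,S_j)$ is a bijection and $A$ is ext-finite. Property C2 with $n=2$, $k=1$ gives $\Ext^1_A(S_i,S_j)\cong\Ext^1_A(S_j,S_i)^*$, so the two arrow counts agree and the arrows between distinct vertices split into pairs $a\leftrightarrow a^*$. For loops I would use that C3 makes the self-pairing on $\Ext^1_A(S_i,S_i)$ skew-symmetric and nondegenerate, which forces an even number of loops; after choosing bases dual (respectively symplectic) for $\<,\>^1$ this yields $Q_M=Q^d$ with an involution $a\mapsto a^*$ normalised so that $\<a,a^*\>^1=1$.

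\emph{The relations.} The minimal resolution recalled in Section \ref{gradedcomplete} shows that the minimal relations of $L_MA$ from $i$ to $j$ are counted by $\dim\Ext^2_{\TMod\hat A_{\ideal m}}(S_i,S_j)$, which injects into $\Ext^2_A(S_i,S_j)$ by Lemma \ref{exts}(2); and C2 identifies $\Ext^2_A(S_i,S_j)\cong\Hom_A(S_j,S_i)^*=\C\delta_{ij}$. Hence there is at most one relation per vertex and none between distinct vertices. The decisive point is that the comparison map respects Yoneda products, so that the product
\[
\Ext^1_{\TMod\hat A_{\ideal m}}(S_i,S_j)\otimes\Ext^1_{\TMod\hat A_{\ideal m}}(S_j,S_i)\to\Ext^2_{\TMod\hat A_{\ideal m}}(S_i,S_i)
\]
is identified, via the degree-one bijection, with the product over $A$, which by C3 is the nondegenerate pairing $\<,\>^1$. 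Therefore at each non-isolated vertex this product is nonzero, so the relation $\rho_i$ exists, its lowest degree part $(\rho_i)_\mm$ is quadratic, and evaluating against the dual arrow basis together with the skew-symmetry above identifies $(\rho_i)_\mm$ with the preprojective relation $\sum_{h(a)=i}aa^*-\sum_{t(a)=i}a^*a$; at isolated vertices there is no relation, matching the trivial preprojective relation. Since the $(\rho_i)_\mm$ lie in $\gr\ideal i$, we obtain a surjection $\Pi(Q^d)\twoheadrightarrow C_MA$.

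\emph{From surjection to isomorphism, and the main obstacle.} It remains to prove that $\gr\ideal i$ contains no further relations. I would compare Hilbert series: the series of $C_MA=\gr_{\ideal m}A$ is fixed by the $\ideal m$-adic filtration of $A$, whose numerical data (the arrow numbers $\dim\Ext^1_A$, a single quadratic relation per vertex, and global dimension $2$ by C1) coincide with the Euler data of $\Pi(Q^d)$, and a graded surjection between algebras with equal Hilbert series is an isomorphism. The main obstacle I anticipate is exactly this final comparison together with the gradability issue of Section \ref{gradedcomplete}: gradability is governed by $\Ext^3_{\Pi(Q^d)}$, which vanishes in the non-Dynkin case (where $\Pi(Q^d)$ has global dimension $2$) but not in the Dynkin case, where $\Pi(Q^d)$ is self-injective of infinite global dimension and the higher order tails of the $\rho_i$ must be controlled. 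Handling the Dynkin case uniformly, and making precise the identification of $(\rho_i)_\mm$ with the preprojective relation by relating the differential $d_2$ of the minimal resolution to the Yoneda product and the pairing C3, is where the real work lies.
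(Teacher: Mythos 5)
Your construction of the quiver and of the quadratic parts of the relations follows essentially the same route as the paper: count arrows by $\Ext^1_A(S_i,S_j)$ via Lemma \ref{exts}(1), count relations by $\Ext^2$ via C2, and use the C3 pairing (skew-symmetric for $n=2$) to put the coefficients of the unique relation at each vertex into symplectic form, recovering the preprojective relation as lowest degree part. One small difference in route: the paper first enlarges $M$ to an extension $N$ towards surjectivity, so that $\Ext^2_{\TMod \hat A_{\ideal m}}(S_i,S_j)\to\Ext^2_{\Mod A}(S_i,S_j)$ becomes a bijection, pins down the relations of $L_NA$ there, and only then passes to $L_MA$ as a quotient via Theorem \ref{towardssurjectivity}; you instead argue directly that the Yoneda product of nilpotent $1$-extensions lands in the image of $\Ext^2_{\TMod \hat A_{\ideal m}}$ and is forced to be nonzero by nondegeneracy. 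That shortcut is defensible, but also note the paper keeps track of normalising scalars $\alpha_i$ coming from the trace identification $\Ext^2_A(S_i,S_i)\cong\Hom(S_i,S_i)^*\cong\C$; only $\alpha_{t(a)}g_{ab}$, not $g_{ab}$ itself, is antisymmetric.

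The genuine gap is your final step. A Hilbert series comparison cannot close the argument: the Hilbert series of $C_MA=\gr_{\ideal m}A$ is not determined by the arrow counts, the relation counts and the global dimension of $A$ --- the $\ideal m$-adic associated graded can be homologically very different from $A$ (indeed the paper stresses that $C_MA$ may be a Dynkin preprojective algebra, which is finite dimensional and of infinite global dimension, so no Euler-type formula $1/(1-at+rt^2)$ computes its series). What is actually needed, and what the paper proves, is that $\{r_i\}$ is a \emph{gradable} set in the sense of Section \ref{gradedcomplete}, i.e.\ that $\gr\ideal i$ is generated by the quadratic parts $(r_i)_{\mm}$ alone. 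The criterion reduces this to the syzygies of $A_{\naive}=\Pi(Q^d)$, i.e.\ to $\Ext^3$: in the non-Dynkin case the preprojective algebra has global dimension $2$, so there are no relations among the relations and gradability is automatic; in the Dynkin case there is exactly one syzygy per vertex, concentrated in the top degree $d$ of the finite-dimensional algebra $\Pi(Q^d)$, so substituting $r_i$ for $(r_i)_{\mm}$ produces correction terms of degree $>d$ which necessarily vanish. You correctly identified the Dynkin case as the obstacle, but the tool you propose to handle it would fail; the degree bound on the syzygies is the missing idea.
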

\begin{proof}
We can extend $M$ towards surjectivity for $2$-extensions by lemma \ref{towardssurjectivity}. Now look at the filter category $\TMod N$
of this new semisimple module $N$. Let us denote the simple factors of $N$ by $S_i, i \in \cI_N$ and the subset of indices that refer to
factors of $M$ by  $I_M$. 
For the $S_i,S_j$ with  $i,j \in I_M$, the $2$-extension spaces in $\T_N\Mod A=\TMod L_N A$ are in bijection with
the $2$-extensions in $\Mod A$. For such an $S_i$ we can find a projective resolution as a topological module:
\[
\xymatrix{
\cdots\ar@{^(->}[r]&\bigoplus_{t(r)=i}P_{h(r)}\ar@{^(->}[r]^{\cdot ra^{-1}}&\bigoplus_{t(a)=i}P_{h(a)}\ar[r]^{\cdot a}&P_i\ar@{->>}[r]&S_i 
}
\]
the number of relations $r$ with $h(r)=i,t(r)=j \in I_N$ equals the dimension of the space $\Ext^2_{L_N A}(S_{i},S_{j})$, so when
 $i,j \in I_M$ we know this is equal to the dimension of $\Ext^2_A(S_{i},S_{j})\stackrel{CY}{=}\Hom(S_{j},S_{i})=\delta_{ij}$.
So if $i \in I_M$ we will denote by $r_i$ the unique relation with $h(r_i)=t(r_i)=i$.
For every $a$ with $h(a),t(a) \in I_M$ we define $f_a = r_{t(a)}a^{-1}$. Because  $r_i \in \ideal W_N^2$ we can split
$f_a = \sum_{b \in Q_{N1}}g_{ab}b + d_a$ where the first part is a linear combination of arrows and $d_a \in  \hat\cJ_N^2$.

We work out the composition of $\eta \in \Ext^1(S_i,S_j)$ and $\xi \in \Ext^1(S_j,S_i)$ with $i,j \in I_M$.
both extensions can be seen as maps: 
\[
\eta: \{a| h(a)=i, t(a)=j\} \to \C\text{ and }\xi: \{a| h(a)=j, t(a)=u\} \to \C
\]
We can use this $g$ to explicitly calculate the pairing.
\[
\Ext^1(S_j,S_i) \times \Ext^1(S_i,S_j) \to \Ext^2(S_i,S_i): (\xi_a)*(\eta_b) = \sum_{ab}g_{ab}\xi_a\eta_b.
\]
Property $C3$ for Calabi Yau algebras now implies that there are scalars $\alpha_i, i \in Q_0$ (coming from the traces) such that 
$\alpha_{t(a)}g_{ab}$ is antisymmetric and non-degenerate.
Using a base transformation on the arrows we can put $\alpha_{t(a)}g_{ab}$ in its standard symplectic form.
The fact that $g_{ab}\ne 0 \implies h(a)=t(b) \wedge t(a)=h(b)$ indicates that this base transformation
only mixes arrows with identical head and tail. 
In this new basis the arrows can be partitioned in couples $(a,a^*)$ with $\alpha_{t(a)}g_{a^*a} = 1$ and
$\alpha_{t(a)}g_{ab}=0$ if $b\ne a^*$. The relation now becomes
$r_i = \sum_{t(a)=i,h(b)=i} \alpha_{t(a)}g_{ab}ba + d_i$ with $d_i \in \ideal W^3$.

If we now proceed to the tangent cone of $M$, we must factor out all $e_i$ with $i \not \in I_M$ and
then take the associated graded. 
This means that all relations that were not one of the $r_i$ become zero. The $r_i$ are in fact
the preprojective relation with some higher order terms, so if we can prove that the $\{r_i\}$ form a gradable
set (see \ref{gradedcomplete}) then we are done.
First assume that $Q_M$ is connected. We have to distinguish two cases. 
\begin{itemize}
\item If $Q_M$ is not the double of a Dynkin quiver then it was proved in \cite{eting} that the preprojective algebra on $Q$ has global dimension $2$.
This implies that there are no $\Ext$'s of degree $3$, and hence there are no relations between the preprojective relations which means
that the $\{r_i\}$ are gradable set.
\item If $Q_M$ is the double of a Dynkin quiver then the preprojective algebra is not of global dimension $2$ but using the work of \cite{schofield}, \cite{erdman}, \cite{king} we have a nice description of the syzygies. The preprojective algebra is finite dimensional and as it is graded we can look at the highest degree component that is nonzero. Call this degree $d$. For every vertex $v$ in the quiver there is a unique relation between the relations $r_{i\mm}$:
\[
 \sum_i a_{vi}r_{i\mm}b_{vi}=0\text{ with $\degree a_{vi}+\degree b_{vi}=d$}
\]
If we substitute $r_{i\mm}$ by $r_{i}$ and evaluate its homogeneous parts in the preprojective algebra we see that the must evaluate to $0$ because their
degree is to high. This implies that the $\{r_i\}$ form again a gradable set.
\end{itemize}
If $Q_M$ is not connected then we must look at each connected component separately because then the 
$C_MA$ and $L_MA$ are direct sums of the subalgebras supported by the connected components. 
\end{proof}

Take care: although most preprojective algebras are $2$-Calabi Yau themselves it is not true that $C_M A$ is
always a $2$-CY algebra. The easiest counterexample is the following: take $A$ to be the preprojective algebra
of the double of a quiver with one cycle, this is a Calabi Yau algebra by theorem \ref{calab}. Let $M$ be the semisimple module
that is the direct sum of all simple modules except one. The tangent cone is the preprojective algebra of the double
of a Dynkin quiver of type $A_n$. As we already noted, this algebra is not Calabi Yau.

If we want to consider the case of CY3, extra problems arise and at the moment there is no proof that works in all cases.
There are however some interesting partial results. First of all if the category $\TMod M$ is itself CY3 then Rouquier and Chuang \cite{rouquier} have proved that $L_MA$ must derive from a superpotential $W \in \C \hat Q_M$. 

If $\TMod M$ is not CY3 it is still sometimes possible to prove that $L_MA$ comes from a superpotential using a result by Ed Segal. In \cite{segal} he proved that if the $A^\infty$-structure on $\Ext_{A}(M,M)$ admits a bilinear structure $\<,\>$ for which it is cyclic i.e. 
$\<a_0,m_k(a_1,\dots,a_k)\> = \<a_1,m_k(a_2,\dots,a_0)\>$ then $L_MA$ derives from a superpotential. It is not clear whether 
this always holds if $A$ is CY3, but it holds if $A$ is itself derived from a superpotential. So if $A$ is a graded CY3 algebra and $M$ any semisimple representation then $L_MA$ is derived from a superpotential. 

The fact that $L_MA$ derives from a superpotential does not necessary imply that $C_MA$ derives from a superpotential.
This is true if $W_\mm$ is a nondegenerate superpotential (i.e. $\C Q/(\partial_a W_{\mm})$ is CY3) but in the other cases
extra relations between the derivative $\partial W_{\mm}$ might prevent $\{\partial_a W\}$ might from being a gradable set.
At the moment there is not enough known for algebras with degenerate superpotentials to rule out these problems.

\section{Examples}

In this final section we will describe two examples coming from fundamental groups of compact orientable aspherical manifolds (i.e. for which the higher dimensional homotopy groups vanish). It was pointed out by Kontsevich and Ginzburg \cite{ginzburg} that the group algebras of such groups are CYd where $d$ is the real dimension of the manifold. These algebras are not graded and do in general not derive from preprojective relations or superpotentials.
Hence they form a nice class of algebras for which we can try to compute the tangent cone.
\subsection{Compact surfaces}
The fundamental group of a compact orientable surface is a group of the form
\[
G := \<X_1,Y_1,\dots,X_g,X_g| X_1Y_1X_1^{-1}Y_1^{-1}\cdots X_gY_gX_g^{-1}Y_g^{-1}\>
\]
where $g$ is the genus of the surface.

Let us look at some tangent cones for some representations of orientable compact surface groups.
If $W$ is a one-dimensional representation one can manually check that 
\[
 C_W \C G \cong \C\<X_1,Y_1,\dots,X_g,Y_g\>/\<[X_1,Y_1]+\cdots+[X_g,Y_g]\> =: \Pi_{2g}
\]
Which is indeed the preprojective algebra on one vertex and $2g$ loops.

In \cite{Rapinchuk} it is shown that for every $n$ $\Rep_n \C G$ is an irreducible variety.
We can compute its dimension using the results on preprojective algebras from \cite{momentmap}.
\[
 \Dim \Rep_n \Pi_g = \begin{cases}
                                                                           n^2+n &g=1\\
(2g-1)n^2+1&g>1
                                                                          \end{cases}
\]
Now if $g>1$ then every variety $\Rep_n \Pi_g$ contains simples and hence so does
$\Rep_n \C G$. The tangent cone of such a simple representation $W$ must again be of the
form $\Pi_h$ for some $h \in \N$. We must have that
\se{
 \Dim \Rep_n \Pi_g &= \Dim \Rep_1 \Pi_h + \Dim \cO_W\\
 (2g-1)n^2 +1 &= 2h + n^2 -1\\
 (g-1)n^2 + 1 &= h
}
So $h$ only depends on the dimension of the simple representation.
In a similar way we can deduce that if $W_1$ and $W_2$ are 
simple representations with dimension $n_1$ and $n_2$, then the number of arrows between
the two simple components will be $a \in \N$ such that  
\se{
 \Dim \Rep_{n_1+n_2} \Pi_g &= \Dim \Rep_{(1,1)}C_{{W_1\oplus W_2}}\C G  + \Dim \cO_{W_1\oplus W_2}\\
 (2g-1)(n_1+n_2)^2 +1 &= 2(h_1+h_2+a)-1 + (n_1+n_2)^2-2\\
(2g-2)(n_1+n_2)^2 +4 &= 2((g-1)n_1^2 + (g-1)n_2^2 + 2 +a)\\
(g-1)((n_1+n_2)^2  -n_1^2 - n_2^2)
&=a\\
2n_1n_2(g-1)&=a.
}
This implies that the local quiver and the tangent cone of any semisimple representation
can be determined by the dimensions of the simples:
\begin{theorem}
If $M =S_1^{\oplus \eps_1} \oplus  \cdots \oplus S_k^{\oplus \eps_k}$ is a semisimple $\C \fg_g$ representation and
the factor $S_i$ has dimension $n$, then the local quiver of $M$ has $k$ vertices with on the $i^{th}$ vertex $2(g-1)n_i^2+2$ loops
and $2n_in_j(g-1)$ arrows from the $i^{th}$ to the $j^{th}$ vertex.
\end{theorem}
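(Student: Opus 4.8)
The plan is to reduce everything to the computation of the dimensions $\dim_\C\Ext^1_{\C G}(S_i,S_j)$, exploiting that the group algebra $\C G$ is Calabi--Yau of dimension $2$. As recalled at the start of this section, $G=\fg_g$ is the fundamental group of the aspherical orientable surface $\Sigma_g$, so $\C G$ is CY$2$ by \cite{ginzburg}; Theorem \ref{calablocal} then guarantees that $C_M\C G$ is a preprojective algebra, whence its underlying quiver is a double and the number of vertices equals the number $k$ of isomorphism classes of simple factors of $M$ directly from the definition of the local quiver. What is left is to count loops and arrows. By the remark following Lemma \ref{exts}, the number of arrows from $j$ to $i$ in $Q_M$ is $\dim_\C\Ext^1_{\C G}(S_i,S_j)$; since this is an invariant of the pair $(S_i,S_j)$ alone it is insensitive to $M$ and to the multiplicities $\eps_i$, so it suffices to pin down these Ext-dimensions for an arbitrary pair of simples.

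To do so I would pass to the topology of the surface. Because $\Sigma_g$ is aspherical it is a $K(G,1)$, so for finite-dimensional simples there are natural isomorphisms $\Ext^\bullet_{\C G}(S_i,S_j)\cong H^\bullet\bigl(G,\Hom_\C(S_i,S_j)\bigr)$, the group cohomology with coefficients in $S_i^*\otimes_\C S_j$. As $\Sigma_g$ carries a finite two-dimensional $CW$-model, these groups vanish above degree $2$, $\C G$ is ext-finite, and the Euler-characteristic formula yields
\[
\sum_{\ell\ge 0}(-1)^\ell\dim_\C\Ext^\ell_{\C G}(S_i,S_j)=\chi(\Sigma_g)\cdot\dim_\C\Hom_\C(S_i,S_j)=(2-2g)\,n_in_j,
\]
where $n_i=\dim_\C S_i$.

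Finally I would extract the individual terms. Schur's lemma gives $\dim\Hom(S_i,S_j)=\delta_{ij}$, and Property C2 (CY$2$ duality) gives $\Ext^2_{\C G}(S_i,S_j)\cong\Hom_{\C G}(S_j,S_i)^*$, so $\dim\Ext^2(S_i,S_j)=\delta_{ij}$ as well. Substituting into the displayed identity and solving for the remaining term gives $\dim\Ext^1(S_i,S_j)=2(g-1)n_in_j$ for $i\ne j$ and $\dim\Ext^1(S_i,S_i)=2(g-1)n_i^2+2$. Reading these off as arrow counts via the remark produces exactly $2(g-1)n_in_j$ arrows from the $i^{\text{th}}$ to the $j^{\text{th}}$ vertex and $2(g-1)n_i^2+2$ loops at the $i^{\text{th}}$ vertex, and one checks this reproduces the representation-variety dimension counts done above in the cases $k=1,2$. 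The only genuinely delicate steps are the identification $\Ext^\bullet_{\C G}\cong H^\bullet(G,-)$, which is where asphericity of the surface is essential, and the verification that the finiteness hypotheses behind the Euler-characteristic formula hold; everything else is bookkeeping.
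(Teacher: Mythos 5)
Your proof is correct, but it takes a genuinely different route from the paper's. The paper never touches group cohomology: it first checks by hand that the tangent cone at a one-dimensional representation is the preprojective algebra on one vertex with $2g$ loops, then invokes Rapinchuk's theorem that $\Rep_n\C G$ is irreducible together with Crawley-Boevey's dimension formula $\dim\Rep_n\Pi_g=(2g-1)n^2+1$, and extracts the loop count $h$ at a simple of dimension $n$ from the identity $\dim\Rep_n\Pi_g=\dim\Rep_1\Pi_h+\dim\cO_W$; the arrow count between two distinct simples is then obtained from a second such dimension count for $\Rep_{n_1+n_2}$. Your argument replaces all of this by the observation that the arrow counts are the numbers $\dim\Ext^1_{\C G}(S_i,S_j)$ (the remark after Lemma \ref{exts}), computed via $\Ext^\bullet_{\C G}(S_i,S_j)\cong H^\bullet(G,\Hom_\C(S_i,S_j))$ and the Euler characteristic of the length-three free resolution of $\C$ coming from the aspherical $CW$-structure on $\Sigma_g$, with $H^0$ and $H^2$ pinned down by Schur's lemma and CY$2$ duality. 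What each approach buys: the paper's computation is a deliberate advertisement for the tangent-cone machinery (dimensions of $\iss$ and $\Rep$ read off from the local model), but it leans on external inputs --- irreducibility of $\Rep_n\C G$ and the existence of simples of every dimension $n$ for $g>1$ --- and proceeds by bootstrapping through the cases $k=1$ and $k=2$. Your computation is more self-contained and uniform: it treats all pairs $(S_i,S_j)$ at once, needs no irreducibility or existence statements, and makes transparent that the answer depends only on $\chi(\Sigma_g)=2-2g$ and the dimensions $n_i$. The only points worth stating explicitly if you write this up are the standard isomorphism $\Ext^n_{\C G}(S_i,S_j)\cong H^n(G,S_i^*\otimes_\C S_j)$ for the diagonal action, and the fact that the presentation complex of the one-relator presentation of $\fg_g$ is the surface itself, which is what gives the finite free resolution $0\to\C G\to\C G^{2g}\to\C G\to\C\to 0$ underlying your Euler-characteristic identity.
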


In general if $A$ is a 2-CY algebra we can deduce some information about the representation spaces using the tangent cones.
First of all we know that every connected component of $\Rep_n A$ must be irreducible because this is the case for the preprojective
algebras. This allows us to define the component semigroup $\CS(A)$. Its elements are the connected components of all $\Rep_n A$.
The group operation is induced by the direct sum:
\[
\gamma_1 + \gamma_2 = \gamma_3 \iff \exists W_1 \in \gamma_1:\exists W_2 \in \gamma_2: W_1 \oplus W_2 \in \gamma_3.
\]
Two simples in the same component $\gamma$ have the same local quiver because the number of loops in the local quiver is
equal to the dimension of $\gamma/\!\!/\GL_n$. A simple is never located at a singularity because $\Rep_1 \Pi(Q)$, where $Q$
is a quiver with one vertex, is always smooth.
 
If we have one simple $T$ in $\gamma_1$ and two simples $S_1,S_2$ in $\gamma_2$ then the local quiver and hence the tangent cone of
$T\oplus S_1$ and $T\oplus S_2$ are the same. This is because if the double quiver is of the form
\[
\xymatrix{\vtx{}\ar@2@(ld,lu)^{\ell_1}\ar@2@/^/[r]^{a}&\vtx{}\ar@2@(rd,ru)_{\ell_2}\ar@2@/^/[l]^{a}} 
\]
We can deduce $\ell_i$ from the dimensions of $\gamma_i/\!\!/\GL_{n_i}$ and $a$ from the dimension of $\gamma_1+\gamma_2/\!\!/\GL_{n_1+n_2}$:
\[
\dim \gamma_1+\gamma_2/\!\!/\GL_{n_1+n_2} = \ell_1+\ell_2 +2a -2.
\]
If we have a finite set of generators for the component semigroup $\gamma_1,\dots,\gamma_u$ and we chose a simple representation $S_i$
in each component $\gamma_i$, we can construct the tangent cone of $M = S_1 \oplus \dots \oplus S_n$: $\Glob A:=C_M A$ this
tangent cone can be seen as a global model of $A$ in the sense that
for every semisimple $A$-module $M$ there exists a $\Glob A$-module $M'$ with $C_M A\cong C_{M'}A$. This notion of global quiver is a generalization of
the one quiver in \cite{onequiver}.

\subsection{The Heisenberg Algebra}

The integral Heisenberg algebra is the group
\[
 \grp{H} := \left\{ \left[ \begin{smallmatrix} 1&a&b\\0&1&c\\ 0&0&1\end{smallmatrix} \right]: a,b,c \in \Z\right\}
\]
it can be presented as a group generated by two elements which both commute with their commutator.
\[
\< X,Y| \{X,\{X,Y\}\} ,\{Y,\{X,Y\}\}\> \text{with $\{X,Y\}=XYX^{-1}Y^{-1}$}.
\]
The corresponding group algebra has the following presentation
\[
\C\<X,Y,X^{-1},Y^{-1}\>/\<XYX^{-1}Y^{-1}-YX^{-1}Y^{-1}X, XYX^{-1}Y^{-1}-Y^{-1}XYX^{-1}\>
\]
It is a CY3 algebra because the Heisenberg group is the fundamental group of the quotient 
\[
 \left\{ \left[ \begin{smallmatrix} 1&a&b\\0&1&c\\ 0&0&1\end{smallmatrix} \right]: a,b,c \in \R\right\}/\grp{H}.
\]
The left side of the expression is contractible so the quotient is aspherical.

To calculate the tangent cones we need some technical machinery.
Suppose $M$ is a simple representation of $A =\C Q/\cI$. Define $B_\eps$ to be the open disc with radius $\eps$ around the origin in $\C$
and let $\theta: B_\eps^k \to \Rep_\alpha Q$ be an analytical parameterization 
such that 
\begin{itemize}
 \item $\theta(0)=M$
 \item $T_M\Image \theta \cap T_M \GL_{\alpha} M=0$
 \item $\GL_\alpha \Image \theta$ covers an analytical open neighborhood of $M$ in $\Rep_{\alpha} A$ (so not necessarily the whole $\Rep_{\alpha} Q$).
\end{itemize}
This implies that we can expand $\theta$ as
\[
 \theta(t) = \rho_M +  t_i\theta_i+ t_it_j\theta_{ij} +\dots
\]
For every $\mu \in \N$ we can define an expanded version of $\theta$ that acts on $\mu\times\mu$-matrices $T_i$ instead of scalars
and has values in $\Rep_{\mu\alpha} Q$. 
\[
\theta^{(\mu)}(T) := \id_{\mu}\otimes \rho_M +  T_i\otimes \theta_i+ T_iT_j\otimes\theta_{ij} +\dots
\]
Here $T$ is a $k$-vector of $\mu\times \mu$-matrices. This map is by construction $\GL_\mu$-covariant. 

If $r$ is a relation in $\cI$ then we can calculate $r(\theta^{(\mu)})$ which gives us for every $\alpha$-entry of $r$
a $\GL_\mu$-covariant function $r_{\kappa}: \Mat_\mu(\C)^k \to \Mat_\mu(\C)$. We can split every $r_{\kappa}$ in its homogeneous parts $r_{\kappa i}$.
These functions are compatible with taking the direct sum so using the techniques from the reconstruction theorem \ref{recon}
they can be expressed as a noncommutative monomials in the variables $T$ and the $r_\kappa$ as noncommutative power series.

\begin{theorem}\label{param}
$L_M A := \C\<\!\!\<T_1,\dots, T_k\>\!\!\>/\<r_{\kappa}\>$
\end{theorem}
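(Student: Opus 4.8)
The plan is to exhibit $\theta^{(\mu)}$ as a $\GL_\mu$-equivariant analytic slice transverse to the orbit of $M^{\oplus\mu}$ in $\Rep_{\mu\alpha}Q$ for every $\mu$, to identify the common zero locus of the functions $r_\kappa$ with the transverse slice of $\Rep_{\mu\alpha}A$ at $M^{\oplus\mu}$, and then to recover $L_MA$ from these slices by the reconstruction machinery of Theorem \ref{recon}. First I would record what simplicity gives: since $M$ is simple, $\Stab_{\GL_\alpha}M=\C^{*}\id$, the local quiver $Q_M$ has a single vertex, and $k=\dim\Ext^1_{\C Q}(M,M)=\dim\Rep_\alpha Q-\dim\GL_\alpha M$. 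The three conditions on $\theta$ say exactly that its image is a smooth analytic slice transverse to $\GL_\alpha M$ inside the affine space $\Rep_\alpha Q$, so the classes $[\theta_i]$ form a basis of the normal space $\Ext^1_{\C Q}(M,M)$; in particular $\theta^{(\mu)}$ identifies the formal neighborhood of the origin in $\Mat_\mu(\C)^{k}=\Rep_\mu\C\<\!\!\<T_1,\dots,T_k\>\!\!\>$ with a germ inside $\Rep_{\mu\alpha}Q$.

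Second, I would promote the slice to every multiplicity. The differential of $\theta^{(\mu)}$ at the origin sends $\dot T\mapsto\dot T_i\otimes\theta_i$, with image $\Ext^1_{\C Q}(M,M)\otimes\Mat_\mu(\C)$. Because $M$ is simple, $\Ext^1_{\C Q}(M^{\oplus\mu},M^{\oplus\mu})\cong\Ext^1_{\C Q}(M,M)\otimes\Mat_\mu(\C)$, so tensoring the $\mu=1$ transversality with $\Mat_\mu(\C)$ shows that $\theta^{(\mu)}$ is transverse to $\GL_{\mu\alpha}M^{\oplus\mu}$; equivariance under $\GL_\mu$ together with the covering property at $\mu=1$ then makes $\GL_{\mu\alpha}\Image\theta^{(\mu)}$ an analytic neighborhood of $M^{\oplus\mu}$. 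The block-diagonal identity $\theta^{(\mu'+\mu'')}(T'\oplus T'')=\theta^{(\mu')}(T')\oplus\theta^{(\mu'')}(T'')$ records that these slices, and hence the functions $r_\kappa$, are compatible with direct sums.

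Third, I would cut out $\Rep_{\mu\alpha}A$. By construction $r_\kappa(T)=0$ for all $\kappa$ precisely when $\theta^{(\mu)}(T)\in\Rep_{\mu\alpha}A\subset\Rep_{\mu\alpha}Q$, so $\theta^{(\mu)}$ restricts to a slice of $\Rep_{\mu\alpha}A$ near $M^{\oplus\mu}$ defined by $\{r_\kappa=0\}$. By the analytic slice theorem together with Lemma \ref{fibered}, this slice is isomorphic, compatibly with $\GL_\mu$ and with $\oplus$, to the formal neighborhood of the zero representation in $\Rep_{\mu\alpha_M}L_MA$, i.e. to $\Rep_\mu\bigl(\C\<\!\!\<T\>\!\!\>/\<r_\kappa\>\bigr)$. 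Feeding the resulting matching of representation schemes and their direct-sum maps into the completed analog of Theorem \ref{recon} forces $L_MA\cong\C\<\!\!\<T_1,\dots,T_k\>\!\!\>/\<r_\kappa\>$; equivalently one may first reconstruct $C_MA=\gr L_MA$ from the graded pieces and then lift, using that the full power series $r_\kappa$ pin down the whole formal neighborhood.

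The step I expect to be the main obstacle is the third one: upgrading the set-theoretic equivalence ``$\theta^{(\mu)}(T)$ satisfies the relations of $A$ $\iff r_\kappa(T)=0$'' to a scheme-theoretic identification of the slice ideal with $\<r_\kappa\>$ for all $\mu$ simultaneously, so that nilpotents and multiplicities match, and making the analytic slice comparison rigorous at the level of completed local rings. By contrast, the transversality bootstrap in the second step is forced by simplicity via the isomorphism $\Ext^1_{\C Q}(M^{\oplus\mu},M^{\oplus\mu})\cong\Ext^1_{\C Q}(M,M)\otimes\Mat_\mu(\C)$, and the completed version of the reconstruction theorem is the remaining, essentially routine, technical point.
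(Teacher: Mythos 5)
Your proposal follows essentially the same route as the paper's own (very brief) proof: exhibit $\theta^{(\mu)}$ as an analytic slice of $\Rep_{\mu\alpha}Q$ at $M^{\oplus\mu}$, restrict to the zero locus of the $r_\kappa$ to obtain a slice of $\Rep_{\mu\alpha}A$, and then run the reconstruction theorem \ref{recon} adapted to complete algebras and analytic slices. In fact your write-up supplies more detail than the paper does --- the transversality bootstrap via $\Ext^1(M^{\oplus\mu},M^{\oplus\mu})\cong\Ext^1(M,M)\otimes\Mat_\mu(\C)$ and the flagged scheme-theoretic identification of the slice ideal with $\<r_\kappa\>$ --- both of which the paper leaves implicit.
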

\begin{proof}
The embedding $\theta^{(\mu)}$ is in fact a slice of $\Rep_{\mu\alpha} Q$ at $M^\mu$. This slice is however not an algebraic slice but an analytical one.
The restriction to the zero locus of the $r_{\kappa}$ gives a slice of $\Rep_{\mu\alpha} A$.
We can in fact replicate the reasoning in the reconstruction theorem \ref{recon}, adapted to complete instead of graded algebras and use analytical slices.
\end{proof}

We will now apply this theorem to the case of simple representations of the integral Heisenberg group.
\begin{theorem}
The tangent cone of any simple module of the integral Heisenberg group algebra is isomorphic to
the algebra coming from the superpotential $X^2Y^2-XYXY$.
\[
 C_W \C \grp H \cong \C\<X,Y\>/\<\partial_X(X^2Y^2-XYXY),(\partial_Y(X^2Y^2-XYXY)\>.
\]
\end{theorem}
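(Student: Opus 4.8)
The plan is to feed the two defining relations of $\C\grp H$ into the parametrization theorem \ref{param}. Write $W=X^2Y^2-XYXY$; a short computation with the cyclic derivative gives
\[
\partial_X W=XY^2+Y^2X-2YXY=[[X,Y],Y],\qquad \partial_Y W=X^2Y+YX^2-2XYX=[X,[X,Y]],
\]
so the target algebra $\C\<X,Y\>/\<\partial_X W,\partial_Y W\>$ is exactly the enveloping algebra $U(\mathcal H)$ of the three–dimensional Heisenberg Lie algebra $\mathcal H=\langle x,y,z\mid[x,y]=z,\ z\text{ central}\rangle$. Thus the statement predicts that every simple $W$ has local quiver a single vertex with two loops and that the two minimal relations are the double commutators saying that $[X,Y]$ is central. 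This is the infinitesimal shadow of the group relations, which say precisely that $X$ and $Y$ commute with their group commutator.

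First I would set up the slice. Since $X,Y$ are units, a simple $W$ realises them as invertible matrices $X_0,Y_0$ with $X_0Y_0=\zeta Y_0X_0$ for a root of unity $\zeta$, and the group commutator $c=XYX^{-1}Y^{-1}$ acts on $W$ as the scalar $\zeta$. The two one–parameter families $X\mapsto e^sX$, $Y\mapsto e^tY$ leave the defining relations invariant and are transverse to the $\GL$–orbit, hence span a two–dimensional subspace of $\Ext^1_{\C\grp H}(W,W)$; I would check by a direct cohomology computation that they exhaust $\Ext^1$, so that $\dim\Ext^1_{\C\grp H}(W,W)=2$ and theorem \ref{param} presents $L_W\C\grp H$ as a quotient of $\C\<\!\!\<X,Y\>\!\!\>$ on two loops. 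The decisive simplification is to take the slice in exponential form, $\theta^{(\mu)}:X\mapsto e^{T_1}\otimes X_0$, $Y\mapsto e^{T_2}\otimes Y_0$ with $T_1,T_2\in\Mat_\mu(\C)$: then $X^{-1}=e^{-T_1}\otimes X_0^{-1}$ and $Y^{-1}=e^{-T_2}\otimes Y_0^{-1}$ are immediate and no power–series inversion is needed.

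The engine of the proof is the leading term of the two relations under this substitution. Using $X_0Y_0=\zeta Y_0X_0$ one checks that the $\Mat_{\dim W}$–factor of every monomial of each relation collapses to the scalar $\zeta$, so that up to this unit each relation becomes a difference of two group commutators in $\Mat_\mu(\C)$,
\[
e^{T_1}e^{T_2}e^{-T_1}e^{-T_2}-e^{T_2}e^{-T_1}e^{-T_2}e^{T_1}\qquad\text{and}\qquad e^{T_1}e^{T_2}e^{-T_1}e^{-T_2}-e^{-T_2}e^{T_1}e^{T_2}e^{-T_1}.
\]
Now I apply the Baker–Campbell–Hausdorff expansion of a group commutator, $\log(e^Ae^Be^{-A}e^{-B})=[A,B]+\tfrac12[A+B,[A,B]]+O(4)$. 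In each difference the quadratic terms $[T_1,T_2]$ of the two summands cancel, and the surviving cubic parts are $[T_1,[T_1,T_2]]$ and $[T_2,[T_1,T_2]]$ respectively. Identifying $T_1=X$, $T_2=Y$ these are $\partial_Y W$ and $-\partial_X W$, so the minimal parts $(r_i)_{\mm}$ of the two relations of $L_W\C\grp H$ are $\partial_Y W$ and $\partial_X W$, up to sign and an invertible change of the two loop variables. This computation involves neither $\zeta$ nor $\dim W$, which is exactly why the answer is the same for every simple module.

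Finally I would pass to the associated graded: by the gradability criterion of section \ref{gradedcomplete}, once $\{r_1,r_2\}$ is gradable we obtain $C_W\C\grp H=\gr L_W\C\grp H=\C\<X,Y\>/\<\partial_X W,\partial_Y W\>$. The mechanism for gradability is that $W$ is a \emph{nondegenerate} superpotential — its superpotential algebra is $U(\mathcal H)$, which is Calabi–Yau of dimension $3$ — so the syzygies among $\partial_X W,\partial_Y W$ (indexed by $\Ext^3_{U(\mathcal H)}$, one per vertex) are all generated by a single canonical superpotential syzygy. Provided $L_W\C\grp H$ itself derives from a superpotential $\tilde W$ (necessarily with $\tilde W_{\mm}=W$, since cyclic differentiation commutes with taking the minimal degree), that canonical syzygy lifts and gradability follows, exactly as in the remark after theorem \ref{calablocal}. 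I expect the genuine obstacles to be the framing of the calculation rather than the calculation itself: checking that the exponential parametrization is a bona fide analytic slice that captures all of $\Ext^1$ (so that there really are only two loops), and showing that $L_W\C\grp H$ derives from a superpotential — for instance by verifying that the nilpotent category $\TMod\hat A_{\ideal m}$ inherits the Calabi–Yau $3$ property of $\C\grp H$ and then applying Rouquier–Chuang. Once these are in place, the Baker–Campbell–Hausdorff step, with the matrix factors separated out, is routine.
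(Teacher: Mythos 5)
Your proposal is correct and follows essentially the same route as the paper: a two-parameter analytic slice fed into Theorem~\ref{param}, a leading-order expansion of the two group relations producing the double commutators $[T_1,[T_1,T_2]]=\partial_Y W$ and $[T_2,[T_1,T_2]]=-\partial_X W$, and gradability deduced from the Calabi--Yau~$3$ property of $\C\<X,Y\>/\<\partial_X W,\partial_Y W\>$. The one substantive difference is how the slice is certified: the paper invokes the Nunley--Magid classification of the simple $n$-dimensional modules ($\rho_{a,b}(X)$ a scaled cyclic permutation matrix, $\rho_{a,b}(Y)$ a scaled diagonal matrix of $n$-th roots of unity) and takes $\theta(t_1,t_2)=\rho_{a(1+t_1),b(1+t_2)}$, so the covering condition of Theorem~\ref{param} is immediate and the two loops come for free; this is exactly the point you flag as an obstacle (that the two scaling directions exhaust $\Ext^1$), which in your setup would require the $H^1(\grp H,\End W)$ computation you defer. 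With the $(1+T_i)$ coordinates the paper expands $(1+T_i)^{-1}$ as a geometric series rather than using Baker--Campbell--Hausdorff, arriving at the same cubic term $T_1^2T_2+T_2T_1^2-2T_1T_2T_1$. On gradability the paper is terser than you are: it simply cites that the superpotential algebra is CY3 and concludes that the minimal parts may be taken, without separately establishing that $L_W\C\grp H$ derives from a superpotential.
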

\begin{proof}
 In \cite{simplesheisenberg} the simple $n$-dimensional representations of this group have been classified and they can be written in the following form
\[
\rho_{a,b}(X) := \begin{pmatrix}
       0&a&0&\dots&0\\
       0&0&a&\dots&0\\
       \vdots&&&\ddots&\dots\\
       a&0&0&\dots&0\\
      \end{pmatrix}
\text{ and }
\rho_{a,b}(Y) := \begin{pmatrix}
       be^{2\pi i/n}&0&\dots&0\\
       0&be^{4\pi i/n}&\dots&0\\
       \vdots&&\ddots&\dots\\
       0&\dots&0&be^{2n\pi i/n}\\
      \end{pmatrix}
\]
So for a given $\rho_{a,b}$ we have a $2$-parameter family $\theta: (t_1,t_2) \to \rho_{a(1+t_1),b(1+t_2)}$ that satisfies the conditions
of theorem \ref{param}.
We get the following equalities
\se{
 \theta(T)(X) &= 1 \otimes \rho_{a,b}(X) + T_1 \otimes \rho_{a,b}(X)\\
 \theta(T)(X^{-1}) &= 1 \otimes \rho_{a,b}(X) - T_1 \otimes \rho_{a,b}(X)+  T_1^2 \otimes \rho_{a,b}(X)+\dots\\
 \theta(T)(Y) &= 1 \otimes \rho_{a,b}(Y) + T_2 \otimes \rho_{a,b}(Y)\\
 \theta(T)(Y^{-1}) &= 1 \otimes \rho_{a,b}(Y) - T_2 \otimes \rho_{a,b}(Y)+  T_2^2 \otimes \rho_{a,b}(Y)+\dots\\
}
so the relation $r= XYX^{-1}Y^{-1} - YX^{-1}Y^{-1}X$ becomes
\se{
r(\theta(T)) &=(1+T_1)(1+T_2)(1-T_1+T_1^2+\dots)(1-T_2+T_2^2+\dots) \otimes \rho_{a,b}(XYX^{-1}Y^{-1})\\
&~-(1+T_2)(1-T_1+T_1^2+\dots)(1-T_2+T_2^2+\dots)(1+T_1) \otimes \rho_{a,b}(YX^{-1}Y^{-1}X)\\
&= ((1+T_1)(1+T_2)(1-T_1+T_1^2+\dots)(1-T_2+T_2^2+\dots)\\
&~-(1+T_2)(1-T_1+T_1^2+\dots)(1-T_2+T_2^2+\dots)(1+T_1)) \otimes 1\\
&= (T_1^2T_2 +T_2T1^2 - 2T_1T_2T_1)\otimes 1+\cdots\\
&= \partial_{T_1}(T_1^2T_2^2 - T_1T_2T_1T_2)\otimes 1+\cdots 
}
The second equation transforms analogously. Now it is known that $\C\<X,Y\>/\<\partial_X(X^2Y^2-XYXY),(\partial_Y(X^2Y^2-XYXY)\>$
is CY3 \cite{AS},\cite{raf} and hence we can take the minimal parts of the relations to get the tangent cone.
\end{proof}

One can also calculate that there are no extensions between different simples this implies that all tangent cones
are of the form 
\[
 C_M A := [\C\<X,Y\>/\<\partial_X(X^2Y^2-XYXY),(\partial_Y(X^2Y^2-XYXY)\>]^{\oplus k}
\]
for $M= S_1^{\oplus \eps_1}\oplus \cdots \oplus S_k^{\oplus \eps_k}$.

\bibliographystyle{amsplain}
\bibliography{local}
\end{document}